\newtheorem{thm}{Theorem}[section]
\newtheorem{cor}[thm]{Corollary}
\newtheorem{lem}[thm]{Lemma}
\newtheorem{prop}[thm]{Proposition}
\newtheorem{theorema}{Theorem}
\theoremstyle{definition}
\theoremstyle{definition}
\newtheorem{rem}[thm]{Remark}
\newcommand\be{\begin{equation}}
\newcommand\ee{\end{equation}}
\newcommand\bee{\begin{equation*}}
\newcommand\eee{\end{equation*}}
\newcommand\ben{\begin{enumerate}}
\newcommand\een{\end{enumerate}}
\def\CC{\ensuremath {{\mathbb C}}}
\def\H{\ensuremath {{\mathscr H}}}
\def\I{\ensuremath {{\mathscr I}}}
\def\R{\ensuremath {{\mathbb R}}}
\def\F{\ensuremath {{\mathscr F}}}
\def\A{\ensuremath {{\mathbb A}}}
\def\P{\ensuremath {{\mathscr P}}}
\def\L{\ensuremath {\mathscr L}}
\def\Z{\ensuremath {{\mathbb Z}}}
\def\D{\ensuremath {\mathscr D}}
\def\C{\ensuremath {\mathscr C}}
\def\U{\ensuremath {\mathscr U}}
\def\K{\ensuremath {\mathcal K}}
\def\M{\ensuremath {\mathscr M}}
\def\J{\ensuremath {\mathscr J}}
\def\o{\ensuremath {\mathfrak o}}
\def\O{\ensuremath {\mathscr O}}
\def\a{\ensuremath {\mathfrak a}}
\def\tr{\ensuremath {\textnormal{tr}}}
\def\vol{\ensuremath {\textnormal{vol}}}
\def\GL{\ensuremath {\mathrm{GL}}}
\def\bs{\ensuremath {\backslash}}
\def\ac{\ensuremath {\mathrm{ac}}}
\def\disc{\ensuremath {\mathrm{disc}}}
\numberwithin{equation}{section}
\title[A weighted invariant trace formula]{A weighted invariant trace formula}
\author{Tian An Wong}
\email{tiananw@umich.edu}
\address{University of Michigan, 4901 Evergreen Road, Dearborn, MI 48128}
\subjclass[2010]{22E55  (primary), 11R39 (secondary)}
\keywords{Stable trace formula, basic functions.}
\begin{document}
\begin{abstract}
This paper begins a new approach to the $r$-trace formula, without removing the nontempered contribution to the spectral side. We first establish an invariant trace formula whose discrete spectral terms are weighted by automorphic $L$-functions. This involves extending the results of Finis, Lapid, and M\"uller on the continuity of the coarse expansion of Arthur's noninvariant trace formula to the refined expansion, and then to the invariant trace formula, while incorporating the use of basic functions at unramified places. 
\end{abstract}

\maketitle

\tableofcontents \addtocontents{toc}{\protect\setcounter{tocdepth}{1}}

\section{Introduction}

\subsection{Motivation}

The Arthur-Selberg trace formula is one of the major tools in the theory of automorphic forms and harmonic analysis. Given a reductive group $G$ over a number field $F$, there is a linear form $J$ on $C_c^\infty(G(\A_F)^1)$ with parallel spectral and geometric expansions which constitute the trace formula. The monumental work of Arthur established the stabilization of the trace formula,  which in turn has led to the endoscopic classification of automorphic representations of various classical groups. In order to do so, one first makes the trace formula invariant, expressing $J$ as a linear combination of invariant distributions on $G$; its stabilization in turn depends crucially upon the Fundamental Lemma. To gain deeper knowledge of Langlands' principle of functoriality, and in particular beyond the endoscopic cases, it is important to establish further refinements of the trace formula.  Langlands' original investigation \cite{BE} analyzed a limiting form of the trace formula, whereby the spectral terms would be weighted by the order of the poles of the relevant $L$-functions (or their residues) at $s=1$. In order for the limit to exist, it was understood that the contribution of nontempered automorphic representations to the trace formula would first have to be removed by some means. In \cite{FLN} it was suggested that a Poisson summation formula be applied to the elliptic terms on the geometric side of the trace formula, where the dual sum would be used to cancel the nontempered contribution. 

To date, this method has only been made to work in the setting of GL$_2(\mathbb Q)$ and the standard representation \cite{Alt1}, and even there requiring additional methods from analytic number theory that appear to be prohibitive in higher rank \cite{GKM}. Moreover, as Arthur has shown in \cite{artstrat}, even if we take for granted the generalization of \cite{Alt1} to general $G$, it remains unclear how one should match the terms in the dual sum to the contribution of nontempered terms, let alone how the limiting form of the trace formula might be studied. In hindsight, in attempting to lay out a path beyond endoscopy, Langlands had uncovered numerous problems that have to be solved, each one difficult in its own right \cite{BE}. It is perhaps because of these difficulties that other new directions have unfolded in the last decade under the banner of `Beyond Endoscopy,' divergent from Langlands' vision of using the Arthur-Selberg trace formula; these exciting new directions make points of contact with other important problems in the theory of automorphic forms and raise interesting questions in arithmetic geometry. That being said, in this paper we shall stay within the ambit of the aforementioned works, introducing a new approach while still falling in line with Arthur's interpretation of Langlands' proposal \cite{problems}. 

In Arthur's formulation, two further refinements should follow the stabilization of the trace formula. The first refinement, now called the $r$-trace formula, comes in the form of a trace formula whose spectral terms are weighted by factors related to the poles of automorphic $L$-functions $L(s,\pi,r)$. Implicit in this is an extension of the trace formula to a class of noncompactly supported test functions that we denote by $\C^\circ(G(\A)^1)$, where $\A=\A_F$, defined in \eqref{Ccirc} and preceding. This latter extension was established by Finis, Lapid, and M\"uller \cite{FLM,FLss,FL} for the coarse expansion of the noninvariant form $J$, as a distribution on the group $G(\A)^1$, 
\[
J(\dot f) = \sum_{\chi\in\mathfrak X}J_\chi(\dot f) = \sum_{\o\in \O}J_\o(\dot f),\qquad \dot f\in\C^\circ(G(\A)^1).
\]
Among such test functions is a distinguished family of functions, now referred to as basic functions, which can be used to weight the cuspidal spectral terms with the associated automorphic $L$-functions. As the $r$-trace formula should be weighted by coefficients that are nonzero if and only if $L(s,\pi,r)$ has a pole at $s=1$ (such as the residue), we can view the incorporation of the basic function as a step towards the $r$-trace formula. We note that up until now, basic functions have been little studied in the context of the Arthur-Selberg trace formula. While the geometry of basic functions have been the subject of intense study, the behaviour of their orbital integrals is less understood. Of course, the coarse expansion of $J(\dot f)$ is only the beginning of the story; the objective of this paper is to arrive at an invariant trace formula that incorporates the use of the basic function, and also to lay the groundwork for its stabilization in \cite{wstf1}. 

Crucially, our approach does not presume the need to remove the nontempered contribution to the trace formula, which was the core difficulty encountered in previous attempts to carry out Langlands' proposal with the Arthur-Selberg trace formula. With this modest but significant change in strategy, rather than having to remove the nontempered contribution and then take a limiting form of the trace formula, we are able to take an unconditional step forward by obtaining expansions for the trace formula involving automorphic $L$-functions. We give this intermediate distribution the uninspired name of a {\em weighted trace formula}. This will lead us to the problem of obtaining the meromorphic continuation of the resulting distribution to the point $s=1$. Note that even $L$-functions of nontempered automorphic representations are expected to have meromorphic continuation, so a priori this is not an obstacle as it was in the limit of trace formulas. The latter method required that all the relevant $L$-functions have analytic continuation to the right-half plane Re$(s)>1$, so the nontempered contribution had to be removed first. Incidentally, this was the original motivation for considering relative trace formulas instead, but that perspective leads us down a significantly different path. 

Recall that our goal is to establish an $r$-trace formula. The results of this paper together with the sequel \cite{wstf1} represent progress in that direction. Taken together, we shall have in our hands a stable distribution that incorporates the data of the basic function, and therefore automorphic $L$-functions, from which point the task is to prove meromorphic continuation. With that, the residual distribution at $s=1$ will be an $r$-stable trace formula. One benefit of these unconditional steps forward is that they place us on solid ground, and gradually yield insight into how this continuation might be obtained. Indeed, it seems at the moment that we shall find certain overlap with recent work of Hoffmann on prehomogenous vector spaces and Ng\^{o} on Hankel transforms, both in the context of the geometric side of the trace formula. The situation there might not yet be clear, but more and new tools appear to be available there, which were not  in earlier works. As Arthur's pioneering work on the trace formula has taught us, we can expect the path towards the $r$-trace formula and its later refinement, the primitive trace formula, to be a long and arduous one. The attendant problems will not be solved in a single work, but piece by piece. It is with this long view of the old road to functoriality that we undertake this task. 

\subsection{Main results}

In this paper, we make use of basic functions to establish a weighted invariant trace formula, whose cuspidal spectral terms are weighted by automorphic $L$-functions. Fixing a central induced torus $Z$ of $G$ with an automorphic character $\zeta$ of $Z(F)\bs Z(\A)$, we let $V$ be a large finite set of valuations of $F$ outside of which $G$ and $\zeta$ are unramified. Let $G_V = G(F_V) = \prod_{v\in V} G(F_v)$ and $G^V =G(\A^V) =  \prod_{v\not\in V}' G(F_v)$, where the latter is the usual restricted direct product. The main technical difficulty that we encounter is that Arthur's stabilization of the trace formula is valid only for test functions of the form 
\[
\dot f = f \times u^V
\]
as in \eqref{frs}, where $f$ is a $\zeta^{-1}$-equivariant function {in $C_c^\infty(G(F_V))$} and $u^V$ is the unit element of the $\zeta^{-1}$-equivariant Hecke algebra $\H(G,V,\zeta)$ of $G(\A^V)$ as in \eqref{heck}. In order to properly weight the trace formula, we require instead test functions of the form
\[
f^r_s = f\times b^V
\]
where $f$ is a $\zeta^{-1}$-equivariant function {in $\C^\circ(G(F_V))$} as defined in Section \ref{defs}, and  $b^V$ is the basic function which, as we recall in Section \ref{basic}, does not have compact support. It depends on an irreducible complex finite-dimensional representation $r$ of the $L$-group $^LG$ of $G$, and a complex number $s$ with Re$(s)$ large enough, which we shall assume to be fixed throughout this paper. Note that for our applications, $f$ can in fact taken to be compactly-supported. 

In any case, we have to take the coarse expansion of the noninvariant linear form $J$ as our starting point, and begin the process of refinement there. More precisely, we shall first establish a refined expansion for the noninvariant linear form $J$ applied to $f^r_s$, namely, {for Re$(s)$ large enough, we have}
\begin{align*}
J(f^r_s) &= \sum_{M\in\L}|W^M_0||W^G_0|^{-1} \int_{\Pi(M,V,\zeta)}a^M_{r,s}(\pi)J_M(\pi,f) d\pi\\
&=\sum_{M\in\L}|W^M_0||W^G_0|^{-1} \sum_{\gamma\in\Gamma(M,V,\zeta)}a^M_{r,s}( \gamma)J_M(\gamma,f)
\end{align*}
which, taking $b^V$ to be fixed, we may view as a linear form on $\C^\circ(G,V,\zeta)$. The local distributions $J_M(\pi,f)$ and $J_M(\gamma,f)$ occurring on either side are the usual weighted characters and weighted orbital integrals appearing in Arthur's trace formula. The global coefficients $a^{M}_{r,s}(\pi)$ and $a_{r,s}^{M}(\gamma)$, on the other hand, are now weighted forms of the coefficients $a^M(\pi)$ and $a^M(\gamma)$ that occur in the usual trace formula, where most importantly the spectral coefficient $a^M_{r,s}(\pi)$ now carries the data of the unramified automorphic $L$-function $L^V(s,\pi,r)$. Part of the refined spectral expansion was already obtained in \cite{FLM}, so the bulk of the work falls on refining the geometric expansion. To do so, we revisit Arthur's original arguments, modifying them where the support of the test function $f^r_s$ is relevant. As in Arthur's work, the core argument relies on a study of the unipotent distributions, where the data of the basic function is abstractly incorporated into the geometric coefficient $a^M_{r,s}(\gamma)$ by an invariance argument. And as with the case of $a^M(\gamma)$, our new coefficients $a^M_{r,s}(\gamma)$ are also only explicit for semisimple $\gamma$. The explicit determination of these coefficients for general $\gamma$, we note, is the goal of a recent program of Hoffmann that relates them to zeta functions of prehomogeneous vector spaces. We shall return to this point in a later paper. 

Having established the refined expansion of $J(f^r_s)$, we then proceed to make this form invariant. Let $\C^\circ(G,V,\zeta)$ be the space of $\zeta^{-1}$-equivariant functions in $\C^\circ(G(F_V))$ {(see Section \ref{defs} for the precise definition).} The main result of this paper is an invariant trace formula that is valid for the test functions $f\times b^V$, which we refer to here as the weighted invariant trace formula.

\begin{theorema}
\label{theorem1}
Let $s\in\CC$ with $\mathrm{Re}(s)$ large enough. The linear form 
\[
I^r_s(f) = I({f}^r_s), \qquad f\in \C^\circ(G,V,\zeta)
\]
has the parallel expansions
\begin{align}
\label{theorema}
&  \sum_{M\in\L}|W^M_0||W^G_0|^{-1}\int_{\Pi(M,V,\zeta)}a^{M}_{r,s}(\pi)I_M(\pi,f)d\pi\\
&= \sum_{M\in\L}|W^M_0||W^G_0|^{-1}\sum_{\gamma\in\Gamma(M,V,\zeta)}a_{r,s}^{M}(\gamma)I_M(\gamma,f).\notag
\end{align}
\end{theorema}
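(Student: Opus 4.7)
The plan is to mimic Arthur's standard passage from the noninvariant to the invariant trace formula, applied here to the test function $f^r_s = f\times b^V$. Proceeding by induction on $\dim(A_M/A_G)$, suppose that invariant linear forms $\hat I^M$ have already been constructed on each proper Levi $M\subsetneq G$ for the analogous class of test functions on $M(\A)^1$. I then set
\[
I(f^r_s) := J(f^r_s) - \sum_{M\in\L,\, M\neq G} |W^M_0||W^G_0|^{-1}\, \hat I^M\bigl(\phi_M(f^r_s)\bigr),
\]
where $\phi_M$ is Arthur's canonical map built from the weighted characters $J_M(\pi,\cdot)$, sending a test function on $G$ to an element of the appropriate invariant space on $M$. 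The invariance of $I^r_s$ is then a formal consequence of the defining identities for $\phi_M$ together with the invariance of $\hat I^M$ supplied by the induction hypothesis; crucially, the weighting by $b^V$ is confined to places outside $V$ and so does not interfere with $G_V$-conjugation.

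The first key step is to verify that $\phi_M$ respects the factorization $f\times b^V$. At unramified places outside $V$, the basic function $b^V$ attached to $r$ and $G$ corresponds, under the unramified Satake isomorphism and the embedding ${}^L\!M\hookrightarrow {}^L\!G$, to the basic function $b^V_M$ attached to the restriction of $r$ to ${}^L\!M$. Thus $\phi_M(f\times b^V)$ has the form $\phi_M^V(f) \times b^V_M$, which is exactly the class of test functions for which $\hat I^M$ has been defined inductively, closing the recursion and producing a well-defined $I(f^r_s)$.

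To obtain the parallel expansions, substitute the refined noninvariant spectral and geometric expansions of $J(f^r_s)$ and of each $\hat I^M(\phi_M(f^r_s))$ into the definition of $I(f^r_s)$ and regroup by Levi. Arthur's standard local identities then convert the noninvariant local distributions $J_M(\pi,f)$ and $J_M(\gamma,f)$ into the invariant $I_M(\pi,f)$ and $I_M(\gamma,f)$; these identities act purely at the places in $V$, so the global weighting outside $V$ passes through, and the coefficients $a^M_{r,s}(\pi)$ and $a^M_{r,s}(\gamma)$ reappear verbatim.

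The main obstacle, and what I expect to be the technically hardest part, is justifying the recursion and the rearrangements in the noncompactly supported setting. Arthur's original invariance arguments rely on convergence and finiteness properties immediate for compactly supported $\dot f$, which must be verified anew once the basic function enters. The extension of the Finis--Lapid--M\"uller continuity results from $J$ to the invariant distributions $I_M(\pi,\cdot)$ and $I_M(\gamma,\cdot)$ on $\C^\circ(G(\A)^1)$, established earlier in the paper, is what makes the construction go through: it guarantees that each term $\hat I^M(\phi_M(f^r_s))$ is well-defined for $\mathrm{Re}(s)$ large enough and that the full sum converges, yielding the invariant linear form $I^r_s(f) = I(f^r_s)$ with the stated parallel expansions.
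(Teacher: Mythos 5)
Your skeleton is the right one --- Arthur's inductive definition of $I$ by subtracting $\hat I^M(\phi_M(\cdot))$, followed by substitution of the refined noninvariant expansions and the identities defining $I_M(\gamma,f)$ and $I_M(\pi,f)$ --- and this is indeed how the paper organizes the final step (see \eqref{irs} and the ends of the proofs of Theorems \ref{Igeom} and \ref{Ispec}). Two remarks on the setup. First, the paper applies $\phi_M$ only to the $V$-component $f$, via the map \eqref{phi} from $\H_\ac(G,V,\zeta)$ to $\I_\ac(M,V,\zeta)$, and carries the basic-function data inside the Levi forms $\hat I^{r,M}_s$; your version applies $\phi_M$ to the full adelic function $f\times b^V$ and then needs the descent $\phi_M(f\times b^V)=\phi_M(f)\times b^V_M$. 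That descent is defensible for the \emph{normalized} weighted characters (the normalized intertwining operators act trivially on spherical vectors, so $J^L_M(\pi_v,b_v)$ vanishes for $M\subsetneq L$), but you should say so, since the analogous vanishing is exactly what fails on the geometric side.

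The genuine gap is that you take the refined noninvariant expansions of $J(f^r_s)$, with the coefficients $a^M_{r,s}(\gamma)$ and $a^M_{r,s}(\pi)$, as already available and assert that they ``reappear verbatim'' because the weighting outside $V$ ``passes through.'' Constructing these expansions is the actual content of the theorem and occupies Sections \ref{invgeom} and \ref{invspec}. On the geometric side the splitting formula gives
\[
J_M(\dot\gamma_S,f\times b^V_S)=\sum_{L\in\L(M)}J^L_M(\dot\gamma^V_S,(b^V_S)_L)\,J_L(\dot\gamma^L_V,f),
\]
and the weighted orbital integrals $J^L_M(\,\cdot\,,b^V_S)$ for $L\neq M$ do \emph{not} vanish; they produce the unramified factors $r^G_M(k,b)$ of \eqref{rkb}, which must be folded into genuinely new coefficients via \eqref{aell} and \eqref{ars}. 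Before one can even write that splitting, one must show that the fine geometric expansion of $J_\o$ exists for test functions equal to the noncompactly supported $b_v$ at almost all $v$: this requires redoing Arthur's unipotent analysis (Lemma \ref{JTU} and Proposition \ref{arss}) to produce the numbers $a^M_{r,s}(S,u)$, the descent formula \eqref{arsdesc}, and Proposition \ref{orbits}. The Finis--Lapid--M\"uller continuity results you invoke, as extended in Section \ref{conti}, only cover $f\times u^V$; the paper is explicit that this does not suffice for $f\times b^V$, precisely because the global geometric coefficients depend on the auxiliary set $S$ and must be reconstructed. Your proposal correctly flags convergence in the noncompact setting as the hard point, but locates the difficulty in the wrong place: it is not the formal recursion that is delicate, but the existence and $S$-independence of the new global coefficients.
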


\noindent The required identity will follow from the geometric and spectral expansions established in Theorems \ref{Igeom} and \ref{Ispec}, respectively. They are deduced inductively from the fine expansion of $J(f^r_s)$ above in a rather straightforward manner.  To prove the theorem, we first extend the results of \cite{FLM,FLss,FL} to the invariant linear form $I$, applied to the simpler test functions $f\times u^V$. This will take us part of the way in refining the coarse geometric expansion of $J(\dot f)$. {That is, we have the following (stated as Theorem \ref{invcont}).}

\begin{theorema}
The invariant linear form $I$ on $\H(G,V,\zeta)$ extends to a continuous linear form on $\C^\circ(G,V,\zeta)$. It has the spectral and geometric expansions given by
\begin{align*}
I(f)&= \sum_{M\in\L}|W^M_0||W^G_0|^{-1}\int_{\Pi(M,V,\zeta)}a^M(\pi)I_M(\pi,f)d\pi\\
&= \sum_{M\in\L}|W^M_0||W^G_0|^{-1}\sum_{\gamma\in\Gamma(M,V,\zeta)}a^M(\gamma)I_M(\gamma,f).
\end{align*}
\end{theorema}

\noindent The rest of the proof of Theorem \ref{theorem1}, which is the most difficult part, rests on transforming the global coefficients. The key difference here is that the global coefficients are now replaced with the new coefficients that depend on the basic function $b$, while the local distributions remain unchanged. In place of the unit element $u^V$, we are forced to work with the basic function $b^V$, a nontrivial and noncompactly supported function, at almost all places, thus complicating the necessary arguments. {The core of the analysis of the geometric side lies in the unipotent contribution, where we apply a variation of Arthur's linear independence argument, whereby the dependence on $b^V$ is subsumed into the global geometric coefficient $a^M_{r,s}(\gamma)$, leaving the usual weighted orbital integrals of $f$ at the finite places $V$. In particular, just as the automorphic $L$-function is a global object, we see that the role of the basic function in the trace formula is also {\em global} in nature, despite the fact that recent studies have focused on the local theory.}

\subsection{Outline} We conclude this introduction with a brief outline of the contents of the paper. In Section \ref{basic}, we introduce the necessary definitions and notation, and recall the properties of basic functions that we shall require. In Section \ref{conti} we prove the continuity of the trace formula for both the refined noninvariant trace formula and the invariant trace formula for $\dot f = f\times u^V$, which is a straightforward exercise in the definitions. Then in Sections \ref{invgeom} and \ref{invspec} we undertake the more serious work of establishing the geometric and spectral expansions of $I^r_s(f)$ respectively in \eqref{theorema} using the function $f^r_s = f\times b^V$, obtaining the new weighted global coefficients on each side, proving Theorem \ref{theorem1}.

\section{Basic notions}
\label{basic}

\subsection{Definitions}
\label{defs}

Let $G$ be a connected reductive group over a field $F$ of characteristic zero. We denote by $\L(M)$ to be the collection of Levi subgroups of $G$ containing $M$, $\L^0(M)$ the subset of proper Levi subgroups in $\L(M)$, and $\P(M)$ the collection of parabolic subgroups of $G$ containing $M$. Let $F$ be a global field, and $V$ a finite set of places of $F$. We have the real vector space $\a_M= \text{Hom}(X(M)_F,\R)$, and the set 
\[
\a_{M,V}=\{H_M(m) : m\in M(F_V)\}
\]
is a subgroup of $\a_M$, and $F_V = \prod_{v\in V} F_v$. It is equal to $\a_M$ if $V$ contains an archimedean place, and is a lattice in $\a_M$ otherwise. The additive character group $\a^*_{M,V} = \a^*_M\bs \a_{M,V}^\vee$ equals $\a^*_M$ in the first case, and is a compact quotient of $\a^*_M$ in the second. Let $A_M$ be the maximal split torus of a Levi subgroup $M$ of $G$. We then identify the Weyl group of $(G,A_M)$ with the quotient of the normaliser of $M$ by $M$, thus
\[
W^G(M)= \text{Norm}_G(M)/M.
\]
If $M_0$ is a minimal Levi subgroup of $G$, which we shall assume to be fixed, and denote $\L=\L(M_0), \P=\P(M_0), \L^0=\L^0(M_0),$ and $W^G_0=W^G(M_0)$. Also write $P_0 = M_0N_0$ for the minimal parabolic subgroup containing $M_0$.  

Let $Z$ be a central induced torus of $G$ over $F$. 
We define the pair $(Z,\zeta)$ where $\zeta$ is a character of $Z(F)$ if $F$ is local, and an automorphic character of $Z(\A)$ if $F$ is global. Given a finite set of places $V$, we write $G(F_v) = G(F_V)$ and write $\zeta_V$ for the restriction of $\zeta$ to the subgroup $Z(F_V)=Z(F_V)$ of $Z(\A)$. We then write $G(F_V)^Z=G(F_V)^Z$ for the set of  $x \in G(F_V)$ such that $H_G(x)$ lies in the image of the canonical map from $\a_Z$ to $\a_G$. We shall assume that $V$ contains the places over which $G$ and $\zeta$ are ramified. 

The stable trace formula requires that we work in fact with $G$ a $K$-group as defined in \cite[\S2]{ArtTW}. {Over $F_v$ nonarchimedean, it is again a connected reductive group, but over $F_v$ archimedean, it can be a finite union of connected reductive groups.} Thus
\[
G  = \coprod_{\alpha} G_\alpha\qquad \alpha\in \pi_0(G)
\]
is a variety whose connected components $G_\alpha$ are reductive groups over $F$, equipped with an equivalence class of frames
\[
(\psi,u) = \{(\psi_{\alpha\beta},u_{\alpha\beta}): \alpha,\beta\in\pi_0(G)\}
\]
satisfying natural compatibility conditions. Here $\psi_{\alpha\beta}: G_\alpha\to G_\beta$ in an isomorphism over $\bar{F}$, and $u_{\alpha\beta}$ is a locally constant function from $\Gamma = \text{Gal}(\bar{F}/F)$ to the simply connected cover $G_{\alpha,\text{sc}}$ of the derived group of $G_\alpha$. Any connected reductive group is a component of an $K$-group that is unique up to weak isomorphism. It comes with a local product structure 
\[
G(F_V) = \prod_{v\in V} \coprod_{\alpha_v\in\pi_0(G(F_v))}G_{\alpha_v}(F_v).
\]
The introduction of $K$-groups is to steamline certain aspects of {endoscopy over archimedean local fields}, and the definitions for connected groups will extend to $K$-groups in a natural way. For example, a central induced torus $Z$ of a $K$-group $G$ will have central embeddings $Z\stackrel{\sim}{\to}Z_\alpha\subset Z(G_\alpha)$ for each $\alpha$, and $\zeta$ determines a character $\zeta_\alpha$ for each $\alpha$. We shall call a $K$-group $G$ quasisplit if it has a connected component that is quasisplit over $F$.

Let $\C(G(\A_F)^1)$ be the space of Harish-Chandra Schwartz functions on $G(\A)^1$, and $\C(G(\A_F)^Z,\zeta)$ the $\zeta^{-1}$-equivariant functions on $G(\A)^Z$, {meaning that $f(zx) = \zeta^{-1}(z)f(x)$ for all $z\in Z(\A)$} \cite[\S9]{HCS}. We write $\C(G,V,\zeta) = \C(G(F_V)^Z,\zeta_V)$ for the space of $\zeta^{-1}$-equivariant Schwartz functions on $G(F_V)^Z$, which contains the $\zeta^{-1}$-equivariant Hecke algebra 
\be
\label{heck}
\H(G,V,\zeta) = \H(G(F_V)^Z,\zeta_V)
\ee
defined with respect to a choice of maximal compact subgroup $K_\infty$ of $G(F_{V_\infty})$, where $V_\infty$ denotes the archimedean places in $V$. If $F$ is a local field, we write $\C(G(F_v),\zeta_v)$ and $\H(G(F_v),\zeta_v)$ for the corresponding spaces. There are natural decompositions
\[
\C(G(F_v),\zeta_v) = \bigoplus_{\alpha_v\in\pi_0(G(F_v))} \C(G_{\alpha_v}(F_v),\zeta_{\alpha_v})
\]
and 
\[
\C(G(F_V),\zeta_V) = \bigotimes_{v\in V}\C(G(F_v),\zeta_v),
\]
and similarly for the Hecke algebra. 
We will also denote by $I\C(G(F_V)^Z,\zeta_V)$ and $S\C(G(F_V)^Z,\zeta_V)$ the spaces of orbital integrals and stable orbital integrals of functions in $\C(G(F_V)^Z,\zeta_V)$ respectively.
 
We also recall the space of functions constructed in \cite[\S3]{FLM} extending the usual space of test functions $C_c^\infty(G(\A_F)^1) = C_c^\infty(G(\A)^1)$. For any compact open subgroup $K$ of $G(\A_f)$ the space $G(\A)^1/K$ is a differentiable manifold. Any element $X\in \mathcal U(\mathfrak{g}^1)$, the universal enveloping algebra of the Lie algebra $\mathfrak{g}^1$ of $G(\R)^1= G(\R) \cap G(\A)^1$ defines a left-invariant differentiable operator $f*X$ on $G(\A)^1/K$. Let $\C^\circ(G(\A_F),K)$  be the space of smooth, right-$K$-invariant functions on $G(\A)^1$ which belong to $L^1(G(\A)^1)$ together with all their derivatives. It is a Fr\'echet space under the family of seminorms
\[
||f*X||_{L^1(G(\A)^1)}, \qquad X\in \mathcal U(\mathfrak{g}^1).
\]
For any nonnegative integer $k$, we define the norms
\[
||f ||_{G,k} = \sum_{i} ||X_i *f||_{L^1(G(\A)^1)},
\]
where $X_i$ ranges over a fixed basis of $\mathcal U(\mathfrak g)_{\le k}$ with respect to the standard filtration.
Denote by $\C^\circ(G(\A_F)^1)$ the union of $\C^\circ(G(\A_F),K)$ as $K$ varies over open compact subgroups of $G(\A_f)^1$, and endow $\C^\circ(G(\A_F)^1)$ with the inductive limit topology. 

As with the Hecke algebra, we shall also define the corresponding spaces $\C^\circ(G(\A_F)^Z,\zeta)$ and $\C^\circ(G,V,\zeta)$ obtained from the spaces of $\zeta^{-1}$-equivariant functions on $G(\A)^Z$ and $G(F_V)^Z$ respectively, in a manner parallel to $\C^\circ(G(\A_F)^1)$. The resulting spaces are natural subspaces of the Schwartz spaces $\C(G(\A_F)^1),\C(G(\A_F)^Z,\zeta)$, and $\C(G,V,\zeta)$ respectively. In particular, we shall apply results of Arthur for distributions on $\C(G(\A_F)^1)$ and so on to the smaller spaces $\C^\circ(G(\A_F)^1)$ and so on. Moreover, we will again take $G$ to be a $K$-group, so that 
\be
\label{Ccirc}
\C^\circ(G(\A_F)^1) = \bigoplus_{\alpha\in\pi_0(G)} \C^\circ(G_\alpha(\A_F))
\ee
and similarly with the spaces $\C^\circ(G(\A_F)^Z,\zeta)$ and $\C^\circ(G,V,\zeta)$. 

\subsection{Basic functions and local $L$-factors}

For the moment let $F$ be a nonarchimedean local field, and $G$ a reductive group defined over $F$. Suppose moreover that $G$ is unramified over $F$, meaning that $G$ admits a reductive model over $\O_F$. Recall that we have an exact sequence
\[
0 \to I_F \to \Gamma_F \to \Gamma_k \to 0
\]
where $I_F$ is the inertia group of $\Gamma_F$ , and $k$ is the residue field of $F$ with Frobenius element denoted by $\sigma_F$. If $G$ is quasisplit over $F$, it follows then that $G$ is unramified if and only if the restriction of the homomorphism $\Gamma_F \to \text{Out}(G\otimes_F\bar{F})$ to $I_F$ is trivial. In particular, the action of $\Gamma_F$ on $G^\vee$ factors through $\Gamma_k$, thus we may take $^LG$ to be $G^\vee \rtimes \langle \sigma_F\rangle$.

An irreducible smooth representation of $G(F)$ is unramified if it has a nonzero vector under $G(\O_F)$. Then the isomorphism classes of unramified representations $\pi$ of $G(F)$ are in canonical bijection with the conjugacy classes $\alpha$ of $G^\vee$ in the connected component of $^LG$, $\sigma_FG^\vee \subset G^\vee \rtimes \langle \sigma_F\rangle.$  Fix a maximal compact subgroup $K$ of $G(F)$. There is a twisted form of the Satake isomorphism
\be
\label{satake}
\mathrm{Sat}: \H(G(F),K)\to \CC[\sigma_FG^\vee]^{\mathrm{ad}(G^\vee)}
\ee
from the unramified Hecke algebra of $G(F)$ to the ring of regular functions on $\sigma_FG^\vee$ that are invariant under the adjoint action of $G^\vee$. The bijection $\pi \to \alpha_\pi$ is characterized by the requirement that $ \tr(\pi(f)) = \mathrm{Sat}(f)(\alpha_\pi)$ for any unramified irreducible representation $\pi$.


Given an irreducible complex finite-dimensional representation $r:{^LG}\to \GL(V)$, we have the local $L$-factor of $\pi$ given by
\[
L(s,\pi,r) = \det(1- r(\alpha_\pi)q^{-s})^{-1}
\]
where $q$ is the cardinality of the residue field  of $F$. We may expand it as a formal series
\[
\sum_{n=0}^\infty\tr((\mathrm{Sym}^nr)(\alpha_\pi))q^{-ns}
\]
converging absolutely for Re($s$) large enough, where the abscissa of convergence depends on the eigenvalues of $\alpha_\pi$. Viewing $\det(1- r(\alpha)q^{-s})^{-1}$ as a rational function on $G^\vee$, we would like to invert the Satake isomorphism to obtain a function $b^r_s$ such that 
\[
\tr(\pi(b^r_{s})) = L(s,\pi,r).
\]
Using the formal series expansion above, it is the same as asking for a family of functions $b^r_{n}$ in $\H(G(\A_F),K)$ such that
\be
\label{basicsum}
b^r_s = \sum_{n=0}^\infty b_n^r q^{-ns},
\ee
and $\tr(\pi({b}^r_{n})) = \tr(\text{Sym}^n(\alpha_\pi))$. 

The basic function $b^r_s$ is expected to be a distinguished vector in a certain Schwartz space of $r$-functions called the $r$-Schwartz space, defined as the global sections with compact support of a certain sheaf on a reductive monoid $M^r(F)$ containing $G(F)$ as an open subset \cite{takagi}. Following Ng\^o, we may assume that $G$ is equipped with a determinant homomorphism $\nu:G\to {\bf G}_m$ such that composition $r\circ \nu$ acts by scalar multiplication on the vector space $V$ of $r$, giving an exact sequence
\[
1 \to G_0 \to G \stackrel{\nu}{\to} {\bf G}_m\to 1
\]
where $G_0$ is a semisimple group. This is not a restrictive condition, seeing as we may replace $G$ by $G\times {\bf G}_m$ if necessary. Under this assumption, the sum \eqref{basicsum} is locally finite, and its support can be described explicitly by the weights of $r$. Basic functions have been the subject of much study of late, but as we shall see, our interest will lie not in the functions themselves but their orbital integrals. 

For our purposes, it will suffice to know that $b_s^r$ belongs to the spherical subspace $\H_\mathrm{ac}(G(F),K)$ of the almost-compact unramified Hecke space $\H_\mathrm{ac}(G(F))$ \cite[\S1]{ITF1}. Let $T$ be a maximal split torus of $G$ over $F$. Fix a Borel pair $(B,T)$ of $G$ and consider the Cartan decomposition $G(F) = KT(F)_+K$ using the anti-dominant Weyl chamber $X_*(T)_-$ in the cocharacter lattice $X_*(T)$, where $T(F)_+$ is the image of $X_*(T)_-$ under the map $\mu\mapsto \mu(\varpi)$ with $\varpi$ a uniformizer of $F$. The homomorphism $\nu$ induces a map $X_*(T) \to \Z$. In this case, Li has given an explicit description of the basic function
\be
\label{brs}
b^r_s = \sum_{\mu\in X_*(T)_-}c_\mu(q)\delta^\frac12_{B^-}(\mu(\varpi)){\bf1}_{K\mu(\varpi)K}q^{- \nu(\mu) s}
\ee
where $c(\mu)$ is polynomial in $q^{-1}$, and is a nonnegative integer given explicitly in terms of Kazhdan-Lusztig polynomials and symmetric powers of $r$. 

\begin{lem}
The basic function $b^r_s$ belongs to $\H_\mathrm{ac}(G(F),K)$ for $\mathrm{Re}(s)$ large enough. 
\end{lem}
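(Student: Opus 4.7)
My plan is to verify directly from the explicit formula \eqref{brs} the defining condition of $\H_\mathrm{ac}(G,K)$: namely, that for every $X \in \a_{G,F}$, the cutoff $b^r_s \cdot \mathbf{1}_{\{H_G = X\}}$ lies in $\H(G,K)$, that is, is bi-$K$-invariant and compactly supported.

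The first step is to reduce the behavior of $H_G$ on the support of \eqref{brs} to the single parameter $\nu(\mu)$. Because $G_0$ is semisimple in the exact sequence $1 \to G_0 \to G \xrightarrow{\nu} \mathbf{G}_m \to 1$, the space $\a_G$ is one-dimensional and the Harish-Chandra map $H_G$ restricted to $T(F)$ factors through $\nu$. Consequently each double coset $K\mu(\varpi)K$ lies in a single fiber $\{H_G = X\}$, determined by $\nu(\mu)$, and the terms contributing to this fiber in \eqref{brs} are exactly those with $\nu(\mu)$ equal to one fixed integer $n = n(X)$.

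The second step is to invoke the local finiteness noted just before \eqref{brs}: for each fixed integer $n$, only finitely many antidominant $\mu$ with $\nu(\mu) = n$ have $c_\mu(q) \neq 0$, the surviving ones being controlled by the weights of $\mathrm{Sym}^n r$. Combined with the first step, the cutoff $b^r_s \cdot \mathbf{1}_{\{H_G = X\}}$ is a finite linear combination of characteristic functions of the compact bi-$K$-invariant double cosets $K\mu(\varpi)K$, hence belongs to $\H(G,K)$, which is exactly the almost-compactness condition.

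The role of the hypothesis $\mathrm{Re}(s)$ large is to ensure that the trace series $\tr(\pi(b^r_s)) = \sum_n \tr((\mathrm{Sym}^n r)(\alpha_\pi)) q^{-ns}$ converges absolutely on every unramified $\pi$, so that $b^r_s$ is unambiguously represented by \eqref{brs} as a single spherical function on $G(F)$ rather than merely a formal combination of elements of $\H(G,K)$. I do not anticipate a real obstacle: the substantive input is the support description from the theory of the $r$-Schwartz space \cite{takagi}, after which the verification is a direct inspection of the formula and the Cartan decomposition.
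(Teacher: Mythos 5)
Your argument is correct, and it is essentially a self-contained unpacking of what the paper disposes of by citation: the paper's proof simply invokes \cite[\S3, Proposition 3.4]{Li} for split $G$ and then extends to quasisplit $G$ by noting that the Kato--Lusztig formula (hence the description \eqref{brs}) remains valid by \cite{Haines} and \cite{CCH}. Your two steps --- that $\a_G$ is one-dimensional with $H_G$ factoring through $\nu$ on each double coset, so the fibers $\{H_G = X\}$ are unions of cosets $K\mu(\varpi)K$ with $\nu(\mu)$ fixed, and that each such fiber meets the support of only the single term $b^r_n q^{-ns}$ with $n = \nu(\mu)$, which is compactly supported --- are exactly the content of the cited result. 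Two remarks. First, your reliance on the explicit formula \eqref{brs} quietly restricts you to the split case; but your argument does not actually need the Kazhdan--Lusztig description of $c_\mu(q)$, only the support statement, and that follows for any unramified quasisplit $G$ directly from the Satake isomorphism \eqref{satake} (each $b^r_n = \mathrm{Sat}^{-1}(\tr\,\mathrm{Sym}^n r)$ lies in $\H(G,K)$, and the $\nu$-homogeneity of the weights of $\mathrm{Sym}^n r$ places its support in the fiber $\nu = n$), so your route in fact covers the quasisplit case more cheaply than the paper's appeal to \cite{Haines,CCH}. Second, your account of the hypothesis on $\mathrm{Re}(s)$ is slightly off: since each fiber of $H_G$ receives only finitely many terms, \eqref{brs} defines an element of $\H_\mathrm{ac}(G,K)$ for \emph{every} $s$; the condition $\mathrm{Re}(s)$ large is needed only for the absolute convergence of $\tr(\pi(b^r_s)) = L(s,\pi,r)$ and, later, for membership in the smaller spaces $\C(G) \subset \C^\circ(G)$, not for the almost-compactness asserted here. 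Neither point is a gap in the proof of the lemma as stated.
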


\begin{proof}
The case where $G$ is split is due to  \cite[\S3]{Li}. If $G$ is quasisplit, we simply note that the Kato-Lusztig formula remains valid by \cite[Theorem 7.10]{Haines} and \cite[Theorem 1.9.1]{CCH}, and applying the Satake inversion \eqref{satake} for quasisplit $G$, it follows that the argument of \cite[Proposition 3.4]{Li} and the preceding discussion can be applied. 
\end{proof}

We shall identify $b^r_s$ with its $\zeta^{-1}$-equivariant analogue by replacing the characteristic functions ${\bf1}_{K\mu(\varpi)K}$ in \eqref{brs} with their $\zeta^{-1}$-equivariant analogues. It is also straightforward to extend the basic function to $K$-groups,
\[
b^r_s = \bigoplus_{\alpha\in\pi_0(G)}b^r_{\alpha,s}
\]
where $b^r_{\alpha,s}$ is the basic function defined by the component group $G_\alpha$, thereby placing us in proper generality.

\subsection{Weighting the trace formula}

We now return to $F$ being a global field. Enlarging $V$ if necessary, we shall assume that $G, \zeta$, and $r$ are unramified outside of $V$. Recall the set of families $C(G(\A^V),\zeta^V)$ of semisimple conjugacy classes in $^LG(F_v)$, for $v\not\in V$ in \cite[p.202]{STF1}. We shall in fact consider equivalence classes of families $c^V$, where two families $c$ and $c'$ in $C(G(\A^V),\zeta^V)$ are identified if $c_v = c'_v$ for almost all $v\not\in V$. Then given any $c$ in $C(G(\A^V),\zeta^V)$ and finite-dimensional representation $r$ of $^LG$, the Euler product 
\[
L^V(s,c,r) = \prod_{v\not\in V}\det(1- r(c_v))q_v^{-s})^{-1}
\]
converges to an analytic function in $s$ in some right-half plane. The local components $c_v$ determine unramified irreducible representations $\pi_v = \pi_v(c)$ of $G(F_v)$, and hence an unramified representation $\pi^V(c) = \otimes_{v\not\in V} \pi_v(c)$ of $G(\A^V)$. Then if $c$ is automorphic in the sense that there exists an irreducible representation $\pi_V$ of $G(F_V)$ such that $\pi = \pi_V \otimes \pi^V(c)$ is an automorphic representation of $G(\A)$, then a conjecture of Langlands asserts that $L^V(s,c,r)$ has meromorphic continuation \cite{Lprobs}. We can then identify the unramified automorphic $L$-function as 
\[
L^V(s,c,r) = L^V(s,\pi,r).
\]
If $\pi$ is tempered, the set of coefficients $\tr(r(c(\pi_v)))^k$ for $v\not\in V$ and $k\ge1$ is bounded; if moreover $\pi$ is cuspidal, one expects the $L$-function to have meromorphic continuation to the complex plane, with at most a simple pole at $s=1$. 

We can now properly describe the test functions that we shall use. Let $V_\text{ram}(G,\zeta)$ be the finite set of valuations of $F$ outside of which $G$ and $\zeta$ are unramified.  Fix a subset $V$ of $S$ containing $V_\text{ram}(G,\zeta)$. Given $f$ in $\C^\circ(G,V,\zeta)$, we shall form the test function 
\be
\label{frs}
\dot{f}^r_s = f \times b^V
\ee
in $\C^\circ(G(\A_F)^Z,\zeta)$, where
\[
b^V=b^V_{r,s} = \prod_{v\not\in V} b _{v,s}^r.
\]
so that 
\[
b^V_{G}(c) = b^V_{G}(\pi^V(c))= L^V(s,c ,r).
\]
More generally, for nonarchimedean valuations $v$ in $V$, we may choose  $f_v$ to be a $\zeta^{-1}$-equivariant function in $L^1(G(F_v))$ and let $f_\infty$ be a smooth $\zeta^{-1}$-equivariant function on $G(F_\infty)$ where $F_\infty = \prod_{v|\infty}F_v$ such that 
\[
||f_\infty * X||_{L^1(G(F_\infty))}<\infty, \qquad  X\in \mathcal U(\mathfrak{g}^1).
\]
It follows then that $\dot{f}^r_s$ belongs to $\C^\circ(G(\A_F)^Z,\zeta)$ for Re($s$) large enough.


\section{Continuity of the invariant trace formula}
\label{conti}

\subsection{The coarse expansion}

Let now $F$ be a number field, and let $\H(G(\A_F)^1)$ be the Hecke algebra on $G(\A)^1$. We first recall the noninvariant linear form $J(f)$ on $\H(G,V,\zeta)$ established in \cite[\S2]{STF1} from the original linear form on $\H(G(\A_F)^1)$.  It is a continuous, $Z(F)$-invariant linear form on $\C^\circ(G(\A_F)^1)$ consisting of two different expansions 
\[
J(\dot{f}^1) = \sum_{\o\in\O}J_\o(\dot{f}^1) = \sum_{\chi\in\mathfrak X }J_\chi(\dot{f}^1)
\]
for any $\dot{f}^1\in\H(G(\A_F)^1)$, with both sums converging absolutely. Here $\O$ is the set of $\O$-equivalence classes of element in $G(F)$, whereby two elements are equivalent if their semisimple parts are $G(F)$-conjugate, and $\mathfrak X$ is the set of equivalence classes of cuspidal automorphic data $\chi=\{(P,\sigma)\}$, where $P$ is a standard parabolic subgroup of $G$ with Levi subgroup $M_P$ and $\sigma$ is an irreducible representation of $M_P(\A)^1$, up to a certain equivalence relation as described in \cite{arthureis}. There is a natural projection 
\[
\dot{f}^1\to  \dot{f}^\zeta
\]
from $\C^\circ(G(\A_F)^1)$ onto the space $\C^\circ(G(\A_F)^Z,\zeta) = \C^\circ(G(\A)^Z,\zeta)$ given by
\be
\label{fzeta}
\dot{f}^\zeta(x) = \int_{Z(\A)^x}\dot{f}^1(zx) \zeta(zx) dz
\ee
where $x\in G(\A)^Z$ and $Z(\A)^x$ is the set of $z\in Z(\A)$ such that $H_G(zx)=0.$  We can then define a linear form on $\C^\circ(G(\A_F)^Z,\zeta)$ by
\be
\label{fzeta2}
J(\dot{f}^\zeta) = J^\zeta(\dot{f}^1) = \int_{Z(F)\bs Z(\A)^1} J(\dot{f}^1_z)\zeta(z) dz
\ee
where $\dot{f}^1_z$ denotes the translation of $\dot{f}^1$ by a point $z\in Z(\A)^1$, and the integral depends only on the image $\dot{f}^\zeta$ of $\dot{f}^1$ in $\C^\circ(G(\A_F)^Z,\zeta)$. Also, given a function $ f\in\C^\circ(G,V,\zeta)$, we can also define a linear form on $\C^\circ(G,V,\zeta)$ by setting
\[
J^r_s(f) = J(\dot{f}^r_s) 
\]
where $\dot{f}^r_s = f\times b^V$. 
We then have the noninvariant linear form on $\C^\circ(G,V,\zeta)$ given by
\[
J^r_s(f) = J(\dot{f}^r_s) = J^\zeta(\dot{f}^1)
\]
where $\dot{f}^1$ is any function in $\C^\circ(G(\A_F)^1)$ whose projection $\dot{f}^\zeta$ onto $\C^\circ(G(\A_F)^Z,\zeta)$ equals $\dot{f}^r_s=f\times b^V$. 

We next define an invariant linear form. $I^r_s$ on $\C^\circ(G,V,\zeta)$ inductively by setting
\be
\label{irs}
I^r_s(f) = J^r_s(f) - \sum_{M\in \L^0}|W^M_0||W^G_0|^{-1}\hat{I}^{r,M}_s(\phi_M(f))
\ee
for certain maps 
\be
\label{phi}
\phi_M: \H_\ac(G,V,\zeta)\to \I_\ac(M,V,\zeta)
\ee
constructed from normalized weighted characters \cite[(2.2)]{maps2} (see also \cite{canon}). To stabilize the invariant form $I^r_s$, we must first express the geometric and spectral expansions in terms of local distributions.

\subsection{The refined expansion}

For the remainder of this section,  we shall work more generally with the noninvariant linear form on $\H(G,V,\zeta)$ given by
\[
J(f) = J(\dot{f}) = J^\zeta(\dot{f}^1)
\]
where $\dot{f}^1$ is any function in $\H(G(\A_F)^1)$ whose projection $\dot{f}^\zeta$ onto $\H(G(\A_F)^Z,\zeta)$ equals $\dot{f}=f\times u^V$. It follows from the preceding discussion that $J(f)$ has the parallel expansions
\[
J(f) = \sum_{\o\in\O}J_\o(f) = \sum_{\chi\in\mathfrak X }J_\chi(f)
\]
which we would like extend to a larger family of noncompactly supported test functions. The following lemma extends the coarse expansion to a linear form on the space $\C^\circ(G,V,\zeta)$. 

\begin{lem}
\label{JC}
The linear form $J$ on $\H(G,V,\zeta)$ extends to a continuous linear form on $\C^\circ(G,V,\zeta)$.
\end{lem}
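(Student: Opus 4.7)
My strategy is to reduce to the continuity result of Finis--Lapid--M\"uller \cite{FLM,FLss,FL} for $J$ on $\C^\circ(G)$, and then transfer this continuity through two intermediate spaces. The chain of reductions is
\[
\C^\circ(G,V,\zeta)\ \xrightarrow{\ f\mapsto f\times u^V\ }\ \C^\circ(G,\zeta)\ \xleftarrow{\ \dot f^1\mapsto \dot f^\zeta\ }\ \C^\circ(G),
\]
and $J$ is the pullback of the (already known) continuous form on $\C^\circ(G)$ along these maps. It therefore suffices to show that each arrow is continuous in the respective Fr\'echet topologies, and that the continuous form on $\C^\circ(G)$ descends compatibly.

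First I would handle the $\zeta$-projection. Since $Z$ is an induced torus, $Z(F)\bs Z(\A)^1$ has finite volume, and the seminorms $\|\cdot\|_{G,k}$ defining the Fr\'echet structure on $\C^\circ(G)$ are built from left-invariant derivatives by $\mathcal U(\g^1)$ composed with $L^1(G(\A)^1)$, both of which are invariant under the central translations $\dot f^1\mapsto \dot f^1_z$, $z\in Z(\A)^1$. So $z\mapsto J(\dot f^1_z)$ is bounded uniformly in $f^1$ (with respect to $\|\cdot\|_{G,k}$ for $k$ in the FLM bound), and formula \eqref{fzeta2} shows that $J^\zeta$ is a continuous linear form on $\C^\circ(G)$ factoring through the projection \eqref{fzeta}. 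Thus $J$ is continuous on $\C^\circ(G,\zeta)$.

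Next I would handle the insertion $f\mapsto f\times u^V$. The function $u^V$ is (a normalization of) the characteristic function of an open compact subgroup $K^V\subset G^V$, so it is bounded with compact support; in particular $u^V\in L^1(G^V)$. Because the archimedean enveloping algebra $\mathcal U(\g^1)$ only acts on the $V$-factor of the test function, for any $X\in\mathcal U(\g^1)$ Fubini's theorem yields
\[
\|(f\times u^V)*X\|_{L^1(G(\A)^Z)}\ =\ \|f*X\|_{L^1(G^Z_V)}\cdot \|u^V\|_{L^1(G^V)},
\]
which both shows $f\times u^V\in\C^\circ(G,\zeta)$ and gives the required seminorm bound
$\|f\times u^V\|_{G,k}\le C\,\|f\|_{G,V,k}$.
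Combining the two steps, $J(f):=J(f\times u^V)$ extends continuously to $\C^\circ(G,V,\zeta)$ and agrees with the given linear form on $\H(G,V,\zeta)$ by construction.

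The main obstacle, and the only step that is not purely formal, is the verification that the FLM continuity constants for $J$ are genuinely translation-invariant under $Z(\A)^1$, so that the inner integral in \eqref{fzeta2} can be bounded uniformly in $z$; this is essentially hard-wired into the left-invariant derivative formalism of \cite{FLM}, but it should be checked that the full coarse-expansion bound (rather than just a single $J_\o$ or $J_\chi$) tolerates the $Z(F)\bs Z(\A)^1$-averaging. Once this is granted, the factorization of seminorms above is routine, and the conclusion follows.
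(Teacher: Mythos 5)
Your proposal is correct and follows essentially the same route as the paper's proof: pass from the known continuity of $J$ on $\C^\circ(G)$ to $\C^\circ(G,\zeta)$ via the central averaging \eqref{fzeta2}, and then to $\C^\circ(G,V,\zeta)$ via $f\mapsto f\times u^V$. In fact you supply the seminorm estimates (invariance under $Z(\A)^1$-translation and the $L^1$-factorization for $f\times u^V$) that the paper's own proof leaves implicit in the phrase ``by the construction of the linear forms on each space.''
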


\begin{proof}
We follow the passage of $J$ from $\H(G(\A_F)^1)$ to $\H(G,V,\zeta)$. There is a natural projection
\[
\dot{f}^1\to \dot{f}^\zeta
\]
from $\C^\circ(G(\A_F)^1)$ to $\C^\circ(G(\A_F)^Z,\zeta)$ given by the formula \eqref{fzeta}. Given the linear form $J$ on $\C^\circ(G(\A_F)^1)$, we define a linear form on $\C^\circ(G(\A_F)^Z,\zeta)$ by
\[
J(\dot{f}^\zeta) = J^\zeta(\dot{f}^1) 
\]
where the right-hand side is defined as in \eqref{fzeta2}. 

Now let $f$ be a function in $\C^\circ(G,V,\zeta)$. Given any function $\dot{f}$ in $\C^\circ(G(\A_F)^1)$ whose projection $\dot{f}^\zeta$ onto $\C^\circ(G(\A_F)^Z,\zeta)$ equals $\dot{f}=f\times u^V$, we have the noninvariant linear form on $\C^\circ(G,V,\zeta)$ given by
\[
J(f) = J(\dot{f}) = J^\zeta(\dot{f}^1)
\]
as before, with both spectral and geometric sides converging absolutely. By the construction of the linear forms on each space, it follows that the form $J(f)$ on $\C^\circ(G,V,\zeta)$ is the continuous extension of the corresponding linear form on $\H(G,V,\zeta)$. 
\end{proof}

In order to pass to the invariant trace formula, we first have to refine the expansion of the noninvariant trace formula. In particular, we need to express both sides in terms of the basic distributions $J_M(\gamma,f)$ and $J_M(\pi,f)$. We first refine the geometric side. We refer to \cite[(2.8)]{STF1} for the construction of the global geometric coefficient $a^M(\gamma)$.

\begin{prop}
\label{geomprop}
Let $f\in \C^\circ(G,V,\zeta)$. Then the linear form $J(f)$ has a geometric expansion
\be
\label{Jgeom}
 \sum_{M\in\L}|W^M_0||W^G_0|^{-1}\sum_{\gamma\in\Gamma(M,V,\zeta)}a^M(\gamma)J_M(\gamma,f).
\ee
\end{prop}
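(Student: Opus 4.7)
The plan is to extend Arthur's original fine geometric expansion, valid on $\H(G,V,\zeta)$, to the larger space $\C^\circ(G,V,\zeta)$ by a density-and-continuity argument. Since $\H(G,V,\zeta)$ is dense in $\C^\circ(G,V,\zeta)$, it suffices to exhibit both sides of \eqref{Jgeom} as continuous linear forms for the Fr\'echet topology on $\C^\circ(G,V,\zeta)$; the left-hand side is already continuous by Lemma \ref{JC}, so the real task is the geometric sum on the right.

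For $f\in \H(G,V,\zeta)$ the fine geometric expansion of \cite{STF1} gives
\[
J(f) = \sum_{M\in\L}|W^M_0||W^G_0|^{-1}\sum_{\gamma\in\Gamma(M,V,\zeta)}a^M(\gamma)J_M(\gamma,f),
\]
where the inner sum is finite because $f$ has compact support. Two ingredients are needed to extend this to the Schwartz space. First, for fixed $M$ and fixed $\gamma\in \Gamma(M,V,\zeta)$ the linear form $f\mapsto J_M(\gamma,f)$ must be continuous with respect to the seminorms $\|\cdot\|_{G,k}$; this should follow from the integral representation of $J_M(\gamma,\cdot)$ together with the fact that outside $V$ the function is simply $u^V$, so all analytic subtlety is confined to the finite set $V$. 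Second, the double sum must converge absolutely, with a bound of the form
\[
\sum_M \sum_\gamma \bigl|a^M(\gamma)\,J_M(\gamma,f)\bigr| \;\le\; C_k\,\|f\|_{G,k}
\]
for some $k$ and some constant $C_k$ depending on the support data at archimedean places. Here one combines Arthur's estimates on the coefficients $a^M(\gamma)$ from \cite{STF1} with the weighted orbital integral bounds of Finis--Lapid--M\"uller developed in \cite{FLM,FLss,FL}, which were designed precisely to control sums of this shape on Schwartz inputs. With both ingredients in hand, the two continuous linear forms agree on the dense subspace $\H(G,V,\zeta)$ and therefore throughout $\C^\circ(G,V,\zeta)$, proving \eqref{Jgeom}.

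The principal obstacle is the second ingredient above: obtaining a uniform bound for the geometric double sum with respect to the Fr\'echet seminorms on $f$. Although Arthur handled the compactly supported case in \cite{STF1} and \cite{FLM,FLss,FL} handled the coarse noninvariant expansion for Schwartz functions, the fine expansion blends both aspects, and one must verify carefully that the growth of $a^M(\gamma)$ against $J_M(\gamma,f)$, as $\gamma$ ranges over $\Gamma(M,V,\zeta)$, is dominated by a single Schwartz seminorm of $f$. Since the basic function does not yet appear---we remain in the $\dot f = f\times u^V$ setting---this is expected to reduce to a careful synthesis of existing estimates rather than requiring genuinely new analytic input; the more delicate case involving $b^V$ will be treated separately in the refined expansion of $J(f^r_s)$.
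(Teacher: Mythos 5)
Your strategy---exhibit both sides as continuous linear forms on $\C^\circ(G,V,\zeta)$ and conclude by density of $\H(G,V,\zeta)$---founders on exactly the point you flag as the ``principal obstacle.'' The bound
\[
\sum_M \sum_{\gamma\in\Gamma(M,V,\zeta)} \bigl|a^M(\gamma)\,J_M(\gamma,f)\bigr| \;\le\; C_k\,\|f\|_{G,k}
\]
is not a matter of synthesizing existing estimates: the absolute convergence of the refined geometric expansion is an open problem for general $G$. The semisimple contribution is controlled by Finis--Lapid--M\"uller, but the unipotent contribution would require uniform bounds on the global coefficients $a^M(\gamma)$ over unipotent classes, and such bounds are currently known only for $\GL(n)$ (Matz). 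The remark immediately following this proposition in the paper makes precisely this point. Without absolute convergence the right-hand side of \eqref{Jgeom} is not even visibly a well-defined continuous functional on the Schwartz-type space, so the density argument cannot close.

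The proof actually given sidesteps this by never regrouping into an absolutely convergent double sum over $\gamma$. It starts from the coarse expansion $J(f)=\sum_{\o}J_\o(f)$, whose convergence on $\C^\circ(G)$ is supplied by \cite[Theorem 7.1]{FL}, and then refines each class $J_\o(\dot f^1)$ separately via Arthur's combinatorial descent (\cite[Theorem 8.1]{orbits}), which for fixed $\o$ and $S$ produces only a finite sum over $(M(F)\cap\o)_{M,S}$ and remains valid as soon as $J_\o(\dot f^1)$ itself converges. The sum over $\o$ is then controlled by support/vanishing considerations (the set $\O_\Delta$ and \cite[Theorem 5.2]{local}), and the passage to $\Gamma(M,V,\zeta)$ is the standard $\zeta$-equivariant averaging. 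The resulting expansion is asserted as an iterated, not absolutely convergent, sum---which is all the later comparison of trace formulas requires. If you want to salvage a density argument, you would have to first prove the uniform coefficient bounds, which is a genuinely new (and currently out of reach) analytic input beyond $\GL(n)$.
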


\begin{proof}
The linear form $J(f)$ obtained in Lemma \ref{JC} has the coarse geometric expansion
\[
J(f) = \sum_{\o\in\O}J_\o(f) 
\]
with the sums converging absolutely. Let $G^0$ be the connected component of the identity in $G$, and $G_c$ the identity component of the centralizer of a semisimple element $c$ in $G(F)$. Then the equivalence class $\o$ consists of elements in $G(F)$ whose semisimple Jordan components belong in the same $G^0(F)$ orbit. There is another equivalence relation, which depends on a finite set of places $S$, which we shall assume contains $V$. The $(G,S)$-equivalence classes are defined to be the sets 
\[
G(F) \cap (\sigma U)^{G^0(F)} = \{g^{-1} \sigma u g: g\in G^0(F), u\in U\cap G^0(F)\}
\]
where $\sigma$ is a semisimple element of $G^0(F)$, and $U$ is a unipotent conjugacy class in $G_\sigma(F)$. Any class $\o\in\O$ breaks up into a finite set $(\o)_{G,S}$ of $(G,S)$-equivalence classes. 

Let $\dot{f}^1$ be any function in $\C^\circ(G(\A_F)^1)$ whose projection $\dot{f}^\zeta$ onto $\C^\circ(G(\A_F)^Z,\zeta)$ equals the function $\dot{f}=f\times u^V$. Suppose moreover that
\[
\dot{f}^1= \dot{f}^1_S \times u^{S,1},\qquad \dot{f}^1_S\in \C^\circ(G(F_S)^1).
\]
for $S\supset V$ large enough. The space $\C^\circ(G(F_S)^1)$ naturally embeds in $\C^\circ(G(\A_F)^1)$, while on the other hand any function in $\C^\circ(G(\A_F)^1)$ belongs to $\C^\circ(G(F_S)^1)$ for $S$ sufficiently large.  
It follows from \cite[Theorem 8.1]{orbits} that there is an expansion
\be
\label{ss}
J_\o(\dot{f}^1) = \sum_{M\in\L}|W_0^M||W_0^G|^{-1}\sum_{\dot\gamma\in(M(F)\cap \o)_{M,S}}a^M(S,\dot\gamma)J_M(\dot\gamma,\dot{f}^1_S)
\ee
for any $\o\in\O$, $\dot{f}^1_S\in C_c^\infty(G(F_S)^1),$ and $S$ containing a finite set $S_\o$ of valuations of $F$ including the archimedean places. Here $J_M(\dot\gamma,\dot{f}^1_S)$ is the weighted orbital integral of $\dot{f}^1_S$ over the conjugacy class of $\dot\gamma$ in $G(F_S)$, and is a tempered distribution by \cite{artFT}. The derivation of this formula relies on a combinatorial argument and descent to unipotent weighted orbital integrals, and in particular remains valid so long as the distribution $J_\o(\dot{f}^1)$ is absolutely convergent, and thus for $\dot{f}^1$ belonging to the larger space $\C^\circ(G(\A_F)^1)$. (We discuss the unipotent terms in greater detail in \cite[\S2]{maps2}.)

In order to sum over the classes $\o\in\O$, we have to modify the proof of \cite[Theorem 9.2]{orbits} and appeal to \cite[Theorem 7.1]{FL} instead for the convergence of the sum since $\dot{f}^1$ no longer has compact support. Let
\[
\text{ad}(G^0(\A))_\o=\{x^{-1}\gamma x: x\in G^0(\A),\gamma\in\o\},
\]
and write $\O_\Delta$ for the set of classes $\o$ such that $\text{ad}(G^0(\A))_\o$ meets the support of $\dot{f}^1_S$. Since $J_\o$ annihilates any function which vanishes on $\text{ad}(G^0(\A))_\o$, we obtain therefore
\[
\sum_{\o\in\O}J_\o(\dot{f}^1)  = \sum_{M\in\L}|W_0^M||W_0^G|^{-1}\sum_{\o\in\O_\Delta}\sum_{\dot\gamma\in(M(F)\cap \o)_{M,S}}a^M(S,\dot\gamma)J_M(\dot\gamma,\dot{f}^1_S).
\]
Now suppose that $\dot\gamma$ is any element of $(M(F))_{F,S}$. Then $\dot\gamma$ is contained in a unique class $\o\in\O$, and it follows from \cite[Theorem 5.2]{local} that $J_M(\dot\gamma,\dot{f}^1_S)$ vanishes if $\dot{f}^1_S$ vanishes on $\text{ad}(G^0(\A))_\o$, hence $J_M(\dot\gamma,\dot{f}^1_S)$ vanishes unless $\o$ belongs to $\O_\Delta$. From this we have that 
\[
J(\dot{f}^1) =  \sum_{M\in\L}|W^M_0||W^G_0|^{-1}\sum_{\dot\gamma\in (M(F_S))_{F,S}}a^M(S,\dot\gamma)J_M(\dot\gamma,\dot{f}^1_S).
\]
The rest of the argument is similar to the proof of \cite[Proposition 2.2]{STF1}, so we can be brief. For a fixed set of valuations $S$, the linear form $J(\dot{f}^1)$ is $K^S$-invariant, we may then write
\[
J(f) = \int_{Z(F)Z(\o^S)\bs Z(\A)^1}J(\dot{f}^1_z)\zeta(z)dz
\]
as 
\[
 \sum_{M\in\L}|W^M_0||W^G_0|^{-1}\sum_{\dot\gamma\in (M(F))_{M,S}}a^M(S,\dot\gamma)\int_{Z_{S,\o}\bs Z^1_S}J_M(z\dot\gamma,\dot{f}^1_S)\zeta(z)dz
\]
where
\[
Z_{S,\o}=Z(F)\cap Z(F_S)Z(\o^S)
\]
 and $\o^S = \prod_{v\not\in S}\o_v$, since $Z(\A) = Z(F)Z(F_S)Z(\o^S)$ and $J_M(\dot\gamma,\dot{f}^1_{S,z}) = J_M(z\dot\gamma,\dot{f}^1_S)$ for any $z\in Z(F_S)$. Then using the definition of the coefficient $a^M(\gamma)$, it follows that the geometric expansion of $J(f)$ can be written as 
\[
 \sum_{M\in\L}|W^M_0||W^G_0|^{-1}\sum_{\gamma\in\Gamma(M,V,\zeta)}a^M(\gamma)J_M(\gamma,f)
\]
as required.
\end{proof}

\begin{rem}We note that we have not obtained the absolute convergence of this refined geometric expansion. For semisimple elements $\gamma$, this follows from \cite[Theorem 1]{FLss}, which proves the absolute convergence of the semisimple contribution to \eqref{ss}, and by the argument above one  deduces the absolute convergence of the semisimple contribution to the refined geometric expansion \eqref{Jgeom}. As the authors point out, the absolute convergence of the unipotent contribution would require a uniform bound on the global geometric coefficients, which at present are known only for $\GL(n)$ \cite[Theorem 1.1]{matz}. Fortunately, this is not needed for the applications that we are interested in, which is the comparison of trace formulae. 
\end{rem}

We next refine the spectral expansion.

\begin{prop}
\label{Jspecprop}
Let $f\in \C^\circ(G,V,\zeta)$. Then the linear form $J(f)$ has a spectral expansion
\be
\label{Jspec}
 \sum_{M\in\L}|W^M_0||W^G_0|^{-1}\int_{\Pi(M,V,\zeta)}a^M(\pi)J_M(\pi,f)d\pi,
\ee
with the integrals converging absolutely.
\end{prop}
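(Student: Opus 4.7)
The plan is to mirror the strategy of Proposition \ref{geomprop}, starting from the coarse spectral expansion of Lemma \ref{JC} and refining each $J_\chi$ via Arthur's fine spectral expansion, then reorganizing to obtain the integral over $\Pi(M,V,\zeta)$. The three ingredients that need to be assembled are Arthur's refinement of $J_\chi$ for $\dot f^1 \in \H(G)$, the extension to the larger space $\C^\circ(G)$ due to Finis, Lapid, and M\"uller, and the descent to the $\zeta$-equivariant setting exactly as in the last part of the proof of Proposition \ref{geomprop}.

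For $\dot f^1 \in \H(G)$, Arthur's refinement of the spectral side produces
\[
J_\chi(\dot f^1) = \sum_{M \in \L}|W^M_0||W^G_0|^{-1} \int_{\Pi(M)} a^M_\chi(\pi) J_M(\pi, \dot f^1)\, d\pi,
\]
where $a^M_\chi(\pi)$ is the usual Arthur spectral coefficient restricted to $\pi$ with cuspidal support in the class $\chi$. Once the double sum over $\chi$ and over $M$ is known to converge absolutely, it can be reorganized to yield \eqref{Jspec} with $a^M(\pi) = \sum_{\chi} a^M_\chi(\pi)$. The absolute convergence of this expansion, for $\dot f^1$ in the larger space $\C^\circ(G)$, is precisely the content of \cite{FLM,FLss,FL}, and this furnishes the continuous extension of the refined spectral expansion from $\H(G)$ to $\C^\circ(G)$.

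It then remains to descend to $\C^\circ(G,V,\zeta)$. Following the final steps of the proof of Proposition \ref{geomprop}, we take any $\dot f^1 \in \C^\circ(G)$ whose projection onto $\C^\circ(G,\zeta)$ equals $\dot f = f \times u^V$, and integrate against $\zeta$ as in \eqref{fzeta2}. On the spectral side this produces the $\zeta$-equivariant global coefficients $a^M(\pi)$ for $\pi \in \Pi(M,V,\zeta)$, while the pairing of $u^V$ against the unramified components at places outside $V$ restricts the integral from $\Pi(M)$ down to $\Pi(M,V,\zeta)$, since $J_M(\pi, \dot f^1)$ factors a spherical vector contribution equal to $1$ at each such place. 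Absolute convergence survives these operations because the bounds of \cite{FL} are uniform in the relevant central twists. The main obstacle is ensuring that the sum over $\chi$ can be freely interchanged with both the integral over $\Pi(M)$ and the integral over $Z(F)\bs Z(\A)^1$ in \eqref{fzeta2}; all three operations are controlled by the convergence results of \cite{FL,FLM,FLss}, so once these are invoked the remainder of the argument is formally parallel to the geometric case.
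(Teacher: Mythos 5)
Your overall route is the same as the paper's: coarse spectral expansion from Lemma \ref{JC}, Arthur's fine spectral expansion of $J_\chi$, absolute convergence from Finis--Lapid--M\"uller, then descent to the $\zeta$-equivariant setting. But there is a genuine gap at the unramified places. You assert that for $\dot f = f\times u^V$ the pairing of $u^V$ against the unramified components ``factors a spherical vector contribution equal to $1$,'' so that $J_M(\pi,\dot f^1)$ simply restricts to $J_M(\pi,f)$. This is false for proper $M$: the weighted character is built from a $(G,M)$-family of intertwining operators, and at the places outside $V$ these operators do not act trivially on the spherical vector --- they contribute the unramified normalizing factors. The correct statement is the splitting formula
\[
J_M(\dot\pi_\lambda,\dot{f}) = \sum_{L\in\L(M)}r^L_M(c_\lambda)\, J_L(\pi^L_\lambda,f),
\]
relating the \emph{global unnormalized} weighted character to the \emph{local normalized} ones, where $r^L_M(c_\lambda)$ is built from quotients of automorphic $L$-functions attached to the unramified data $c$. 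These factors are precisely what get absorbed into the definition of the global coefficient $a^M(\pi)$ (this is the content of \cite[Proposition 3.3]{STF1}, which the paper invokes at this point); without them your identification $a^M(\pi)=\sum_\chi a^M_\chi(\pi)$ does not produce the coefficient appearing in \eqref{Jspec}, and the reorganization of the expansion over $\Pi(M)$ into one over $\Pi(M,V,\zeta)$ does not go through.

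Two smaller points. First, Arthur's refinement of $J_\chi$ is not already in the form $\int_{\Pi(M)}a^M_\chi(\pi)J_M(\pi,\dot f^1)\,d\pi$; it is an expansion in terms of $a^M_{\mathrm{disc}}(\dot\pi_\lambda)$ and global unnormalized weighted characters (via \cite[Theorem 8.2]{arthureis} and \cite[Theorem 4.4]{ITF2}), and the passage to local normalized characters is exactly the step above. Second, the absolute convergence on the spectral side is the content of \cite[Corollary 1]{FLM} (with the sum over $\pi$ controlled by \cite[\S 5.1]{FLM} and the identification of expansions in \cite[\S 5.3]{FLM}); the results of \cite{FL} and \cite{FLss} concern the geometric side and are not what is needed here. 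The convergence of the final integral over $\Pi(M,V,\zeta)$ also uses that $\J_Q(\Lambda,\dot\pi_\lambda,P)$ is the scalar multiple $r_Q(\Lambda,c_\lambda,P)\mu_Q(\Lambda,c_\lambda,P)$ of $\M_Q(\Lambda,\pi_\lambda,P)$, another place where the unramified factors cannot be set equal to $1$.
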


\begin{proof}
The linear form $J(f)$ obtained in Lemma \ref{JC} has the fine spectral expansion
\[
J(f) = \sum_{\chi\in\mathfrak X}J_\chi(f) 
\]
which converges absolutely, and where $J_\chi(f)$ is equal to the sum over $M\in\L$ of the product of 
\[
|W^M_0||W^G_0|^{-1}|\det(s-1)_{\a^G_M}|^{-1}
\]
with
\[
\sum_{\pi\in\Pi_\text{unit}(M,\zeta)}\sum_{L\in\L(M)}\sum_{s\in W^L(M)_\text{reg}}\int_{i\a^*_L/i\a^*_G}\tr(\J_L(P,\lambda)J_P(s,0)\I_{P,\chi,\pi}(\lambda,f))d\lambda,
\]
as stated in \cite[Theorem 8.2]{arthureis}. Here 
\[
\J_L(P,\lambda) = \lim_{\Lambda\to0}\sum_{Q\in\P(M)} \J_Q(P,\lambda,\Lambda)\theta_Q(\Lambda)^{-1},
\]
for $\Lambda\in i\a^*_M$ near to 0, is the limit of $(G,M)$-families
\[
\J_Q(P,\lambda,\Lambda) = J_{P|Q}(\lambda)^{-1}J_{Q|P}(\lambda+\Lambda)
\]
and $J_{Q|P}(\lambda)$ is the global unnormalized operator intertwining the actions of the induced representations $\I_P(\pi_\lambda)$ and $\I_Q(\pi_\lambda)$. Also
\[
J_P(s,0) = J_{P|P}(s,\pi_{\lambda+\Lambda}).
\]
It is a consequence of \cite[Corollary 1]{FLM} that the sums are finite and the integrals are absolutely convergent with respect to the trace norm, and define distributions on $\C^\circ(G(\A_F)^1)$. We note that the absolute convergence is proved for an expansion slightly different from the above, but is shown to be equivalent in \cite[\S5.3]{FLM}. Importantly, the sum over $\pi$ does not occur in the latter, but the necessary estimate for this sum, which is not necessarily finite, is contained in \cite[\S5.1]{FLM}.  (See also \cite[Theorem 7.2]{P} for the twisted case.)

Beginning with
\[
J(f) = J^\zeta(\dot{f}^1) = \int_{Z(F)\bs Z(\A)^1}J(\dot{f}^1_z)\zeta(z)dz,
\]
where $\dot{f}^1$ is any function in $\C^\circ(G(\A_F)^1)$ whose projection onto $\C^\circ(G(\A_F)^Z,\zeta)$ equals $\dot{f}=f\times u^V$, it follows from the argument of \cite[Theorem 4.4]{ITF2} and the definition of $a^G_\text{disc}(\dot\pi)$ that $J(f)$ has an expansion
\[
\int_{Z(F)\bs Z(\A)^1}\sum_{M\in\L}|W^M_0||W^G_0|^{-1}\sum_{\dot{\pi}\in\Pi_\text{disc}(M)}\int_{i\a^*_{M,Z}\bs i\a^*_{G,Z}} a^M_\text{disc}(\dot{\pi}_\lambda)J_M(\dot{\pi}_\lambda,\dot{f}^1_z)\zeta(z) d\lambda dz
\]
where
\[
J_M(\dot{\pi}_\lambda,\dot{f}^1_z) = \tr(\J_M(\dot{\pi}_\lambda,P)\I_P(\dot{\pi}_\lambda,\dot{f}^1_z))
\]
is the global unnormalized weighted character on $\C^\circ(G(\A_F)^1)$. It is a consequence of \cite[\S5.1]{FLM} that the inner integral converges absolutely. On the other hand, the integral over $Z(F)\bs Z(\A)^1$ annihilates the contribution of $\dot\pi$ coming from the complement of $\Pi_\text{disc}(M,\zeta)$ in $\Pi_\text{disc}(M)$, hence $J(f)$ equals 
\be
\label{jspec1}
\sum_{M\in\L}|W^M_0||W^G_0|^{-1}\sum_{\dot{\pi}\in\Pi_\text{disc}(M,\zeta)}\int_{i\a^*_{M,Z}/ i\a^*_{G,Z}} a^M_\text{disc}(\dot{\pi}_\lambda)J_M(\dot{\pi}_\lambda,\dot{f}) d\lambda.
\ee
Then arguing as in \cite[Proposition 3.3]{STF1}, it follows from the definition of $a^M(\pi)$ that the spectral expansion \eqref{jspec1} equals 
\[
\sum_{M\in\L}|W^M_0||W^G_0|^{-1}\int_{\Pi(M,V,\zeta)}a^M(\pi)J_M(\pi,f) d\pi
\]
where 
\[
J_M(\pi_\lambda,f) = \tr(\M_M(\pi_\lambda,P)\I_P(\pi_\lambda,f)),\qquad L\in \L(M),P\in\P(L)
\]
is the local normalized weighted character. It is related to the global unnormalized character by the formula
\[
J_M(\dot\pi_\lambda,\dot{f}) = \sum_{L\in\L(M)}r^L_M(c_\lambda) J_L(\pi^L_\lambda,f),
\]
and hence is defined for $f$ belonging to $\C^\circ(G,V,\zeta)$. Also, the operator $\J_Q(\Lambda,\dot{\pi}_\lambda,P)$ is a scalar multiple of $\M_Q(\Lambda,\pi_\lambda,P)$, that is,
\[
\J_Q(\Lambda,\dot{\pi}_\lambda,P) = r_Q(\Lambda,c_\lambda,P)\mu_Q(\Lambda,c_\lambda,P)\M_Q(\Lambda,\pi_\lambda,P),
\]
where the coefficient $r_Q(\Lambda,c_\lambda,P)$ is defined in \cite[\S2]{canon}, and it follows then that the integral over $\Pi(M,V,\zeta)$ converges absolutely. 
\end{proof}

\subsection{The invariant expansion}
Given the noninvariant linear form $J$ on $\H(G,V,\zeta)$, we have already discussed the invariant linear form $I$ also on $\H(G,V,\zeta)$ obtained by setting inductively
\be
\label{Idef}
I(f) = J(f) - \sum_{M\neq G}|W^M_0||W^G_0|^{-1}\hat{I}_M(\phi_M(f))
\ee
for the maps $\phi_M$ described in \eqref{phi}.

\begin{thm}
\label{invcont}
The invariant linear form $I$ on $\H(G,V,\zeta)$ extends to a continuous linear form on $\C^\circ(G,V,\zeta)$. It has the spectral and geometric expansions given by
\begin{align*}
&\sum_{M\in\L}|W^M_0||W^G_0|^{-1}\int_{\Pi(M,V,\zeta)}a^M(\pi)I_M(\pi,f)d\pi\\
&= \sum_{M\in\L}|W^M_0||W^G_0|^{-1}\sum_{\gamma\in\Gamma(M,V,\zeta)}a^M(\gamma)I_M(\gamma,f).
\end{align*}
\end{thm}

\begin{proof}
We recall that for any $\tilde{f}\in \C^\circ(G(F_V),\zeta_V)$, the function $\phi_M(\tilde{f})$ is defined to be the function on $\Pi_\text{temp}(M(F_V)^Z,\zeta_V)$ whose value at $\tilde{\pi}$ is the tempered distribution $J_M(\tilde{\pi},\tilde{f})$ \cite[\S2]{artFT}, and
\[
\phi_M(f,\pi) = \int_{i\a^*_{M,Z}}\phi_M(\tilde{f},\tilde{\pi})d\lambda
\]
where $f$ and $\pi$ are the restrictions of $\tilde{f}$ and $\tilde\pi$ to $G(F_V)^Z$ and $M(F_V)^Z$ respectively. We also define
\[
\phi_M(\tilde{f},\tilde\pi,X) = J_M(\tilde{f},\tilde{\pi},X),\qquad X\in \a_{M,V}
\]
and $\phi_M(f,\pi,X)$ using 
\[
J_M(\pi,X,f) = \int_{i\a^*_M}J_M(\pi_\lambda,f) \mathrm{e}^{-\lambda(X)}d\lambda
\]
if $J_M(\pi_\lambda,f)$ is regular for $\lambda\in i\a^*_M$. In this case, it follows from \cite[Lemma 3.1]{canon} that $\phi_M$ maps $\C^\circ(G(F_V)^Z,\zeta_V)$ continuously to $I\C^\circ(G(F_V)^Z,\zeta_V)$. For general $\pi$ in $\Pi(M,V,\zeta)$, it follows from the proof of Proposition \ref{Jspecprop} that $J_M(\pi,f)$ is well-defined for $f\in \C^\circ(G,V,\zeta)$, and moreover the integral
\[
J_M(\tilde{f},\tilde{\pi},X) = \int_{i\a^*_{M,V}/i\a^*_{G,V}}J_M(\tilde{\pi}_\lambda,\tilde{f}^Z)e^{-\lambda(X)}d\lambda
\]
converges absolutely. Here $Z$ is the image in $\a_{G,V}$ of $X$. 

On the other hand, the weighted orbital integrals $J_M(\gamma,f)$ are tempered distributions on $\C^\circ(G,V,\zeta)$ as a consequence of \cite[Theorem 4.1]{artFT}.
Altogether, it follows that the invariant distributions defined inductively by
\[
I_M(\pi,f) = J_M(\pi,f) -\sum_{L\in\L^0(M)}\hat{I}^L_M(\pi,\phi_L(f))
\]
and
\[
I_M(\gamma,f) = J_M(\gamma,f) -\sum_{L\in\L^0(M)}\hat{I}^L_M(\gamma,\phi_L(f))
\]
on either side of the invariant trace formula hold for functions $f$ in $\C^\circ(G,V,\zeta)$. 

Beginning with the linear form $J$ on $\C^\circ(G,V,\zeta)$, we define the invariant linear form $I$ as in \eqref{Idef}. We can see that the absolute value of $I(f)$ extends to a continuous linear form on $\C^\circ(G,V,\zeta)$, by assuming inductively that the statement holds for $L\in \L^0$ then applying the continuity of the map $\phi_M$ on $\C^\circ(G(F_V)^Z,\zeta_V)$ and the linear form $J$. But we shall also arrive at the same conclusion once we have obtained the desired expansions. Let us  first show that $I(f)$ has the geometric expansion
\[
I(f) =  \sum_{M\in\L}|W^M_0||W^G_0|^{-1}\sum_{\gamma\in\Gamma(M,V,\zeta)}a^M(\gamma)I_M(\gamma,f).
\]
Assume inductively that the required expansion holds if $G$ is replaced by any group $L\in\L^0$. Combining this with the geometric expansion \eqref{Jgeom} for $J$,
we see then that $I(f)$ equals 
\[
\sum_{M\in\L}|W^M_0||W^G_0|^{-1}\sum_{\gamma\in\Gamma(M,V,\zeta)}a^M(\gamma)\left(J_M(\gamma,f) - \sum_{L\in\L^0(M)}\hat{I}^L_M(\gamma,f)\right),
\]
and by definition of $I_M(\gamma,f)$ this yields the required geometric expansion for $I(f)$. On the other hand, the spectral expansion 
\[
I(f) =  \sum_{M\in\L}|W^M_0||W^G_0|^{-1}\int_{\Pi(M,V,\zeta)}a^M(\pi)I_M(\pi,f)d\pi
\]
follows in a similar manner. That is, assuming inductively that the required identity holds for $L\in\L^0$, and using the spectral expansion \eqref{Jspec} for $J$
it follows that $I(f)$ equals 
\[
\sum_{M\in\L}|W^M_0||W^G_0|^{-1}\int_{\Pi(M,V,\zeta)}a^M(\gamma)\left(J_M(\pi,f) - \sum_{L\in\L^0(M)}\hat{I}^L_M(\pi,f)\right)d\pi.
\]
Then by definition of $I_M(\pi,f)$ this yields the required spectral expansion for $I(f)$. 
\end{proof}

As we have alluded to in the beginning, the extension of the linear form $I$ to noncompactly-supported test functions in $\C^\circ(G,V,\zeta)$ does not yet allow for proper use of the basic function. To correct for this, we have to reconsider the passage from $\C^\circ(G(\A_F)^Z,\zeta)$ to $\C^\circ(G,V,\zeta)$, which requires, among other things, a reconsideration of the global geometric coefficients that depend on the finite set $S$ in a complicated way.

\section{Weighting the geometric side}
\label{invgeom}

The treatment of the geometric side is more involved. Let $\a_0 =\a_{M_0}$, and let $A_0$ be the split component of the center of $M_0$. The summands in the geometric expansion
\be
\label{Jrs}
J(\dot f^1) =\sum_{\o\in\O} J_\o(\dot f^1)
\ee
 are obtained by evaluating certain polynomials $J_\o^T(\dot{f}^1)$ at a distinguished point $T =  T_0$ in $\a_0$. We agree to write $J_\o(\dot{f}^1) = J_\o^{T_0}(\dot{f}^1).$ More precisely, let $\dot{f}^1\in \C^\circ(G(\A_F)^1)$ and $T$ be a point in the positive chamber $\a^+_0$ in $\a_0$ associated to $P_0$, suitably regular in the sense that its distance from the walls of $\a^+_0$ is large. Then $J_\o^T(\dot{f}^1)$ is the integral over $x\in G(F)\bs G(\A)^1$ of the function
\[
\sum_{P\in \P }(-1)^{\dim \a_P} \sum_{\delta\in P(F)\bs G(F)} k_{\o,P}(\delta x)\hat{\tau}_P(H_P(\delta x)-T)
\]
where $\hat{\tau}_P$ is the characteristic function of the set 
\[
\{ H\in \a_0 : \varpi(H) >0 ,\varpi \in \hat{\Delta}_P\},
\]
and 
\[
k_{\o,P}(\delta x) = \sum_{\substack{\gamma\in M_P(F)\\ I_P(\gamma) = \o}}\int_{N_P(\A)} \dot{f}^1(x^{-1}\gamma nx)dn.
\]
Here $\hat\Delta_P$ is the basis of $\a^*_P/\a_G^*$ which is dual to the simple roots $\Delta_P$ of $(P,A_P)$. As a function of $T$, $J^T_\o(\dot f^1)$ is a polynomial of degree at most $d_0 = \dim\a_0$, thus it can be extended to all $T\in\a_0$. Our present goal is to provide an expansion for \eqref{Jrs} as a distribution on $\C^\circ(G(\A_F)^1)$ in terms of local distributions. 
According to the proof of Proposition \ref{geomprop}, we can express the geometric side as
\[
J(\dot{f}^1) =  \sum_{M\in\L}|W^M_0||W^G_0|^{-1}\sum_{\dot\gamma\in (M(F))_{M,S}}a^M(\dot\gamma)J_M(\dot\gamma,\dot{f}^1_S),
\]
and from \cite[\S22]{Art}, it follows that the limit
\[
\lim_{S} J(\dot{f}^1)
\]
taken over increasing sets $S$, stabilizes for large finite $S$. Thus, in principle it may be possible to make use of the basic function in the form of $\dot{f}^r_s$ in the above limit, but we would like to have a more explicit form. For this, we shall revisit the refinement of the coarse geometric expansion.

\subsection{Unipotent terms}

We first have to deal with the unipotent contribution, which is the most delicate. It corresponds to the term
\[
J^T_\text{unip}(\dot{f}^1) = J^T_{\o}(\dot{f}^1)
\]
where $\o = \U_G(F)$, the Zariski closure in $G$ of the unipotent set in $G(F)$. It is a closed algebraic subvariety of $G$ defined over $F$, and is one of the classes in $\O$. We recall that the distribution $J_\text{unip}^T(\dot f)$ is obtained by integrating an alternating sum over standard parabolic subgroups, whose leading term is given by
\[
K_\text{unip}(x,x) = \sum_{\gamma\in \U_G(F)} \dot{f}^1(x^{-1}\gamma x).
\]
Let $(\U_G)$ be the set of Gal$(\bar{F}/F)$-orbits of $\U_G$. Then the previous expression can be rewritten as the sum over $U\in (\U_G)$ of
\[
K_U(x,x) = \sum_{\gamma\in U(F)}\dot{f}^1(x^{-1}\gamma x).
\]
In order to establish the refined geometric expansion for functions $ \dot f = f \times u^V$, where $f\in \C^\circ(G,V,\zeta)$ and $u^V$ is the $\zeta^{-1}$-equivariant characteristic function of the maximal compact subgroup $K^V$, we requier the existence of a measure on the unipotent variety for functions in $\C^\circ(G(\A_F)^1)$. We provide the argument here for $\dot{f}^r_s$, which we shall also need to construct our new geometric coefficients. 

Fix a Euclidean norm $||\cdot||$ on $\a_0$, and set $d(T) = \min_{\alpha\in\Delta_{P_0}}\{\alpha(T)\}$. Let $\Lambda^T_d$ be the Arthur's truncation operator applied to the diagonal \cite[p.1242]{unip}
 
\begin{lem}
\label{JTU}
There exist distributions $J^T_U$ for each $U\in \U_G$ which are polynomials in $T$ of total degree at most $d_0$ and such that
\[
J^T_\mathrm{unip}(\dot{f}^1) = \sum_U J^T_U(\dot{f}^1)
\]
for $\dot{f}^1 \in \C^\circ(G(\A_F)^1)$. Moreover, there is a continuous seminorm $\mu$ on $\C^\circ(G(\A_F)^1)$ and constants $\epsilon,\epsilon_0>0$ such that
\be
\label{JTUf}
\left|J^T_U(\dot{f}^1) - \int_{G(F)\bs G(\A)^1}\Lambda^T_d K_U(x,x) dx \right|\le \mu(\dot{f}^1) e^{\epsilon d(T)}
\ee
for all $U\in (\U_G),$  $\dot{f}^1\in \C^\circ(G(\A_F)^1)$ and every suitably regular $T$ with $d(T)\ge \epsilon_0||T||$.
\end{lem}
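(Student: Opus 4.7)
The plan is to follow Arthur's construction from \cite{unip} of the distributions $J^T_U$ for compactly supported test functions, then extend each step to the larger space $\C^\circ(G)$ using the continuity estimates now available from the work of Finis--Lapid--M\"uller.

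First I would, for each $U\in(\U_G)$, define a truncated kernel
\[
k_{U,P}(x) = \sum_{\gamma\in U(F)\cap M_P(F)}\int_{N_P(\A)}\dot{f}^1(x^{-1}\gamma n x)\,dn,
\]
and form the alternating sum
\[
k^T_U(x) = \sum_{P\in\P}(-1)^{\dim\a_P}\sum_{\delta\in P(F)\bs G(F)}k_{U,P}(\delta x)\hat\tau_P(H_P(\delta x)-T),
\]
setting $J^T_U(\dot{f}^1) = \int_{G(F)\bs G(\A)^1}k^T_U(x)\,dx$ once convergence is shown. Since $\U_G(F) = \coprod_{U\in(\U_G)}U(F)$, the decomposition $J^T_{\mathrm{unip}}(\dot{f}^1) = \sum_U J^T_U(\dot{f}^1)$ is then automatic, provided each summand converges.

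The polynomiality in $T$ of degree at most $d_0$ for each $J^T_U$ would follow from exactly the same combinatorial manipulations as in Arthur's original proof for compactly supported $\dot{f}^1$: the argument uses only the support properties of $\hat\tau_P$ and the $(G,M)$-family structure obtained from varying $T$, both of which are insensitive to the support of the test function once the relevant integrals converge absolutely. So the genuinely new content lies entirely in the convergence, which is encoded in the bound \eqref{JTUf}.

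The main obstacle, and where all the work goes, is proving the estimate
\[
\left|J^T_U(\dot{f}^1) - \int_{G(F)\bs G(\A)^1}\Lambda^T_d K_U(x,x)\,dx\right| \le \mu(\dot{f}^1)\,e^{\epsilon d(T)}
\]
for a continuous seminorm $\mu$ on $\C^\circ(G)$. For compactly supported $\dot{f}^1$, this is exactly the estimate Arthur establishes via a careful analysis of the truncation in terms of the geometry of Siegel sets. To extend it to $\C^\circ(G)$, the plan is to replace Arthur's sup-norm and support bounds on $\dot{f}^1$ by integrated bounds involving the seminorms $\|\dot{f}^1\|_{G,k}$ of Section \ref{basic}. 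The key input is \cite[Theorem 7.1]{FL}, which provides precisely such control of $\Lambda^T_d K(x,x)$ and its unipotent components in terms of finitely many Schwartz seminorms of $\dot{f}^1$, with the excess decay in $d(T)$ coming from the truncation operator $\Lambda^T_d$. Once this estimate is in hand, letting the compactly-supported approximations of $\dot{f}^1$ converge in $\C^\circ(G)$ shows simultaneously that the distribution $J^T_U$ extends continuously to $\C^\circ(G)$, that the estimate \eqref{JTUf} persists, and that the polynomial character in $T$ is preserved by this continuous extension.
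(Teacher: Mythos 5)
There is a genuine gap at the very first step: you define $J^T_U$ by the ``naive'' decomposition, restricting the sum in each parabolic kernel to $\gamma\in U(F)\cap M_P(F)$ and then forming the usual alternating sum over $P\in\P$. But the convergence of that integral for an \emph{individual} orbit $U$ is not known, and this is precisely the difficulty that Arthur's construction in \cite[Theorem 4.2]{unip} (which the paper follows) is designed to avoid. The convergence estimate for $J^T_\o$ (Arthur's Theorem 3.1, or \cite[Theorem 7.1]{FL} in the present setting) controls the alternating sum only for the full class $\o=\U_G(F)$; it does not split orbit by orbit, because the cancellations between the parabolic terms mix the contributions of different unipotent orbits (an orbit of $M_P$ induces to an orbit of $G$ whose closure contains several others). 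For the same reason your claim that polynomiality in $T$ ``follows from exactly the same combinatorial manipulations'' fails for an individual $U$: Arthur's polynomiality argument applies to $J^T_\o$ for a class $\o\in\O$, and the polynomiality of each $J^T_U$ is part of the \emph{conclusion} of Theorem 4.2, obtained from the construction, not an input to it.

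What the paper actually does is define $J^T_U$ by a limiting procedure: one introduces the cutoff functions $\dot{f}^{1,\epsilon}_{U,v}$, equal to $\dot{f}^1$ near $\bar{U}(\A)$ and supported in a shrinking neighborhood of the closure of $U$, shows that
\[
\Bigl|J^T_\mathrm{unip}(\dot{f}^{1,\epsilon}_{U,v}) - \int_{G(F)\bs G(\A)^1}\Lambda^T_d K_{\bar{U}}(x,x)\,dx\Bigr|
\]
is small (bounded by a continuous seminorm times $\delta^{rm}(1+\|T\|)^{d_0}$ after choosing $\epsilon=\delta^m$), and then recovers $J^T_U$ from the quantities attached to the closures $\bar U$ by inverting over the closure order on $(\U_G)$. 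The two inputs that must be re-examined for $\dot{f}^1\in\C^\circ(G)$ are the replacement of \cite[Theorem 3.1]{unip} by \cite[Theorem 7.1]{FL} for the term \eqref{un2}, and the verification that \cite[Corollary 3.3]{unip} and \cite[Lemma 4.1]{unip} persist for the larger space (the latter because $F(x,T)$ confines the integral to a compact set). Your instinct to route everything through \cite[Theorem 7.1]{FL} is the right one, and your closing density argument could in principle substitute for part of this, but only after the distributions $J^T_U$ have been \emph{defined} on $\C^\circ(G)$ by the localization procedure; as written, the object you are trying to extend by continuity has not been shown to exist.
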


\begin{proof}
The proof of this statement is a mild generalization of \cite[Theorem 4.2]{unip}, where instead of the convergence estimate \cite[Theorem 3.1]{unip} for $\dot{f}^1\in C^\infty_c(G(\A_F)^1)$ we shall rely on \cite[Theorem 7.1]{FL} for $\dot{f}^1\in \C^\circ(G(\A_F)^1)$. 

Fix an orbit $U\in (\U_G)$. It is a locally closed subset of $G$, defined over $F$, and its Zariski closure $\bar U$ is a closed subvariety of $G$, again defined over $F$. The ideal of polynomial functions on $G$ which vanish on $U$ is of the form $(q_1,\dots,q_l)$, where $q_1,\dots,q_l$ are polynomials on $G$ defined over $F$. If $v$ is nonarchimedean valuation of $F$, we define $\rho_v$ to be the characteristic function of $[-1,1]$, and if $v$ is archimedean, define $\rho_v$ to be any function such that $0 \le \rho_v \le 1 $, equal to 1 on $[-\frac12,\frac12]$ and zero outside of $[-1,1]$.Then for any $\dot{f}^1\in \C^\circ(G(\A_F)^1)$ and $\epsilon>0$, we define the truncated function
\[
\dot{f}^{1,\epsilon}_{U,v}(x) = \dot{f}^1(x) \rho_v(\epsilon^{-1}|q_1(x)|_v)\cdots \rho_v(\epsilon^{-1}|q_l(x)|_v)
\]
where $x\in G(\A)^1$. It again belongs to $\C^\circ(G(\A_F)^1)$, and equals $f$ in a sufficiently small neighborhood of $\bar{U}(\A)$. 

Let $v$ be any valuation of $F$. We shall construct $J^T_U$ by examining the behavior of $J^T_\text{unip}(\dot{f}^{1,\epsilon}_{U,v})$ as $\epsilon$ approaches zero.  Let us write
\[
K_{\bar{U}}(x,x) = \sum_{\{U' \in(\U_G): U' \subset \bar{U}\}} K_{U'}(x,x).
\]
It will suffice to show there exists a continuous seminorm on $\C^\circ(G(\A_F)^1)$ such that for all $\dot{f}^1\in\C^\circ(G(\A_F)^1)$, the difference
\be
\label{un1}
\left|J^T_\text{unip}(\dot{f}^{1,\epsilon}_{U,v}) - \int_{G(F)\bs G(\A)^1}\Lambda_d^TK_{\bar{U}}(x,x)dx\right|
\ee
is bounded by
\be
\label{mubd}
\mu(\dot{f}^1)\delta^{rm}(1+||T||)^{d_0},
\ee
for some $\delta$ such that $0<\delta<1$, $r\ge0$, and $m$ large enough. The desired result will then follow by the same argument as in the proof of \cite[Theorem 4.2]{unip}. 

Given standard parabolic subgroups $P_1\subset P_2$, we write $A^\infty_{P_1}$ for the identity component $A_{P_1}(\R)^0$ of $A_{P_1}(\R)$, and $A^\infty_{P_1,P_2} = A_{P_1}^\infty\cap M_{P_2}(\A)^1$.  Moreover, given $T_1,T_2\in \a_0$, we denote by $A^\infty_{P_1,P_2}(T_1,T)$ the set
\[
\{a \in A^\infty_{P_1,P_2} : \alpha(H_{P_1}(a) - T_1) >0, \alpha\in\Delta^{P_1\cap M_{P_2}}; \varpi(H_{P_1}(a)-T)<0, \varpi\in \hat{\Delta}_{P_1\cap M_{P_2}}\}.
\]
Let $T\in \a_0$ be a suitably regular point. We define $F(x,T)$ to be the characteristic function of the compact subset of $G(F)\bs G(\A)^1$ obtained by the projection
\[
N_0(\A)M_0(\A) A^\infty_{P_0,G}(T_1,T) K \to G(F)\bs G(\A)^1.
\]
Using \cite[Lemma 2.3]{unip}, which states that
\[
\int_{G(F)\bs G(\A)^1} \Lambda^T_d K_U(x,x) dx = \int_{G(F)\bs G(\A)^1} \Lambda^T_d F(x,T)\left(\sum_{\gamma\in U(F)}\dot{f}^1(x^{-1}\gamma x) \right)dx 
\]
and the property that
\[
K_{\bar{U}}(x,x) = \sum_{\gamma\in \bar{U}(F)} f(x^{-1}\gamma x) = \sum_{\gamma\in \bar{U}(F)} \dot{f}^{1,\epsilon}_{U,v}(x^{-1}\gamma x),
\]
we may bound the difference \eqref{un1} by the sum of 
\be
\label{un2}
\left|J^T_\text{unip}(\dot{f}^{1,\epsilon}_{U,v}) - \int_{G(F)\bs G(\A)^1} F(x,T)\left(\sum_{\gamma\in \U_G(F)}\dot{f}^{1,\epsilon}_{U,v}(x^{-1}\gamma x) \right)dx \right|
\ee
and
\be
\label{un3}
\int_{G(F)\bs G(\A)^1}  F(x,T)\sum_{\gamma\in \U_G(F)\bs \bar{U}(F)}|\dot{f}^1(x^{-1}\gamma x)|dx.
\ee
The first expression \eqref{un2} is bounded by
\[
\mu(\dot{f}^{1,\epsilon}_{U,v})(1+ ||T||)^{d_0}e^{-d(T)}
\]
for some continuous seminorm $\mu_1$ on $\C^\circ(G(\A_F)^1)$, as an application of \cite[Theorem 7.1]{FL}. Replacing the seminorm $\mu(f)$ with $\mu(f) N(f)^n$ where the $N(f)$ is defined according to \cite[p.1257]{unip} and for $n$ large enough, we can apply \cite[Corollary 3.3]{unip}, which remains valid for functions in $\C^\circ(G(\A_F)^1)$, to conclude that the latter expression is bounded by
\[
\epsilon^{-l}\mu(\dot{f}^1) (1+ ||T||)^{d_0}e^{-d(T)}. 
\]
On the other hand, the second expression \eqref{un3} is bounded by
\[
\mu(\dot{f}^1) (1+||T||)^{d_0}\epsilon^r
\]
for some $r>0$, using \cite[Lemma 4.1]{unip}, which holds also in our case as the characteristic function $F(x,T)$ implies that the integral is taken over a compact set. Taking $\epsilon = \delta^m$ then, the required bound \eqref{mubd} follows. 
\end{proof}

We shall apply the lemma to obtain the following expansion for the unipotent term. 

\begin{prop}
\label{arss}
Fix a representation $r$ of $^LG$ and $s\in {\bf C}$ with $\mathrm{Re}(s)$ large enough. Then for any $S$, there are uniquely determined numbers 
\[
a^M_{r,s}(S,u), \qquad M\in \L, \ u \in (\U_M(F))_{M,S}
\]
such that 
\be
J^L_\mathrm{unip}(\dot{f}^1_{r,s}) = \sum_{M\in\L}|W^M_0||W^L_0|^{-1}\sum_{u\in(\U_M(F))_{M,S}} a^M_{r,s}(S,u)J_M(u,\dot{f}^1_S)
\ee
for any $L\in\L$ and $\dot{f}^1_{r,s} = \dot{f}^1_{S}\times b^{S,1}_{r,s}$ with $\dot{f}^1_S\in \C^\circ(G(F_S)^1)$.
\end{prop}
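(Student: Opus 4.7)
The plan is to replicate Arthur's construction of the unipotent coefficients in \cite[\S5]{unip} and \cite[Theorem 8.1]{orbits}, substituting $\dot{f}^1_{r,s} = \dot{f}^1_S \times b^{S,1}_{r,s}$ for the compactly supported test function throughout. The basic function $b^{S,1}_{r,s}$ will be absorbed into the new coefficients $a^M_{r,s}(S,u)$ purely through an invariance argument, while the local distributions $J_M(u,\dot{f}^1_S)$ on the right depend only on $\dot{f}^1_S$. The analytic framework allowing this formal construction to go through on the non-compactly-supported space $\C^\circ(G)$ is supplied by Lemma \ref{JTU}, which replaces the compactly supported convergence estimates of \cite{unip}.

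It suffices to prove the identity for $L = G$, since the general case is handled by the same argument with $G$ replaced by $L$. For $L = G$, I would proceed by induction on $\dim G$. Assuming the statement for all proper Levi subgroups $M \subsetneq G$, the difference
\[
D(\dot{f}^1_S) = J_\text{unip}(\dot{f}^1_S \times b^{S,1}_{r,s}) - \sum_{M \neq G} |W^M_0||W^G_0|^{-1}\sum_{u \in (\U_M(F))_{M,S}} a^M_{r,s}(S,u)\, J_M(u,\dot{f}^1_S)
\]
defines a continuous distribution on $\C^\circ(G_S^1)$ by Lemma \ref{JTU} together with the tempered nature of the $J_M(u,\cdot)$ from \cite{artFT}. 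As in Arthur's argument, the non-invariance contributions from the weighted orbital integrals $J_M(u,\cdot)$ with $M \neq G$ exactly cancel the corresponding non-invariance of $J_\text{unip}$, so $D$ is a $G_S$-invariant distribution supported on the unipotent variety of $G_S$. The parametrization of such distributions by $\{J_G(u,\cdot) : u \in (\U_G(F))_{G,S}\}$ then forces a unique expansion $D(\dot{f}^1_S) = \sum_u a^G_{r,s}(S,u)\, J_G(u,\dot{f}^1_S)$, defining the missing coefficients and completing the inductive step.

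The main obstacle is ensuring that Arthur's original estimates in \cite{unip}, formulated for $\dot{f}^1 \in C_c^\infty(G)$, remain valid for the non-compactly supported $\dot{f}^1_{r,s}$. This is precisely the role of Lemma \ref{JTU}, whose proof already uses \cite[Theorem 7.1]{FL} to produce a continuous seminorm on $\C^\circ(G)$ controlling the truncated integrals uniformly in $T$. Once this is in hand, the polynomial nature of $J^T_U(\dot{f}^1_{r,s})$ in $T$ is preserved, and the formal bookkeeping defining the coefficients runs exactly parallel to the compactly supported case. The only substantively new feature is that the coefficients $a^M_{r,s}(S,u)$ now depend on $r$ and $s$ through $b^{S,1}_{r,s}$ in a manner that remains implicit for non-semisimple $u$; making this dependence explicit is the subject of Hoffmann's program on prehomogeneous vector spaces alluded to in the introduction.
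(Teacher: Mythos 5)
Your overall frame agrees with the paper's: induct over Levi subgroups so that the coefficients $a^M_{r,s}(S,u)$ for proper $M$ are already defined, form the difference distribution (your $D$, the paper's $T^L$), observe that the noninvariance of $J^L_\mathrm{unip}$ cancels against that of the weighted orbital integrals so that the difference is an invariant distribution supported on the unipotent set, and use Lemma \ref{JTU} as the analytic substitute for Arthur's compact-support estimates. The gap is in your final step: ``the parametrization of such distributions by $\{J_G(u,\cdot): u\in(\U_G(F))_{G,S}\}$ then forces a unique expansion.'' There is no such general parametrization. Since $S$ contains the archimedean places, $G_S^1$ has real factors, and an invariant distribution supported on the unipotent variety need not be a measure: it can involve derivatives of orbital integrals transverse to the orbits, and it can be supported on $F_S$-orbits that contain no $F$-rational point and hence are not indexed by $(\U_G(F))_{G,S}$ at all. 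What must be \emph{proved} is that the particular distribution $T^L$ lies in the span of the invariant orbital integrals $J^L_L(u,\cdot)$ --- this is the substance of the proposition (and of the corollary that $J_\mathrm{unip}$ restricted to $G_S^1$ is a measure), not a fact one can quote. Your proposal assumes the conclusion at the decisive moment.

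The paper closes this gap by a second, descending induction on the dimension $d$ of unipotent orbits. One introduces the strata $\U_{L,d}(F_S)$ and the subsets $\U_{L,d}(F_S)'$ of classes meeting $L(F)$, the closed sets $C^d$, and the partial differences $T^L_d$; on the complement of $C^d$ the relevant quotient map $\C^\circ(L^1_S\bs C^d)\to\bigoplus_u\C^\circ(L_S/L_{u,S})$ produces the coefficients $a^L_{r,s}(S,u)$ for orbits of dimension $d$, and one must then show $T^L_{d-1}$ vanishes on functions vanishing on $\U_{L,d-1}(F_S)'$. This is where the truncated functions $(\dot f^1_{r,s})^\epsilon_{U,v}$ and the limit formulas
\[
\lim_{\epsilon\to0} J^T_\mathrm{unip}\bigl((\dot f^1_{r,s})^\epsilon_{U,v}\bigr) \;=\; \sum_{\{U'\in(\U_G):\,U'\subset\bar U\}} J^T_{U'}(\dot f^1_{r,s})
\]
enter, and these rest precisely on the quantitative estimate \eqref{JTUf} of Lemma \ref{JTU} (which is why that lemma is stated with a seminorm bound rather than merely as a convergence statement). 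So Lemma \ref{JTU} is not just a license to rerun Arthur's bookkeeping on $\C^\circ(G)$; its estimate is the engine of the existence argument you have omitted. To repair your proof you would need to reinstate this stratified induction, or supply an independent argument that $T^L$ is a measure concentrated on the $F$-rational unipotent classes.
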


\begin{proof}
Assume inductively that the result holds for any Levi $M$ properly containing $L$. Define $T^L(\dot{f}^1_{r,s})$ to be the difference
\[
J^L_\text{unip}(\dot{f}^1_{r,s}) - \sum_{\substack{M\in \L^L\\ M\neq L}}|W^M_0||W^L_0|^{-1} \sum_{u\in(\U_M(F))_{M,S}} a^M_{r,s}(S,u)J^L_M(u,\dot f^1_S)
\]
for $\dot{f}$ as above. We can thus view $T^L$ as a distribution on $L_S^1$ that annihilates any function which vanishes on $\U_L(F_S)$. It is an invariant distribution by the same argument in pp.1269--1270 of \cite{unip}. We need to show that there exist uniquely determined numbers $a^L_{r,s}(S,u)$ such that 
\be
\label{TLf}
T^L(\dot{f}^1_{r,s}) =  \sum_{u\in(\U_L(F))_{L,S}} a^L_{r,s}(S,u)J^L_L(u,\dot f^1_S).
\ee
The uniqueness follows from the linear independence of the invariant orbital integrals $J^L_L(u)$, thus it remains to prove their existence.

For any integer $d$, let $\U_{L,d}$ be the union of orbits $U$ in $(\U_L)$ of maximal dimension $d$. The set 
\[
\U_{L,d}(F_S) = \prod_{v\in S} \U_{L,d}(F_v) 
\]
of $F_S$-valued points is a closed subspace of $L_S$ consisting of a finite union of $L_S$-conjugacy classes. Let $\U_{L,d}(F_S)'$ denote the union over orbits $U\in (\U_L)$ such that $\dim(U)\le d$ and such that $U(F)$ is nonempty, of the spaces $U(F_S)$. It is the union of $L_S$-conjugacy classes parametrized by elements $u\in (\U_L(F))_{L,S}$ with $\dim (U^L_u)\le d$. We see then that if there exist numbers 
\[
a^L_{r,s}(S,u), \qquad u\in (\U_L(F))_{L,S}
\]
such that for any $d$ the distribution 
\[
T^L_d(\dot{f}^1_{r,s}) = T^L(\dot{f}^1_{r,s}) - \sum_{\substack{u \in (\U_L(F))_{L,S}\\ \dim(U^L_u)>d}} a^L_{r,s}(S,u)J^L_L(u,\dot f^1_S) 
\]
annihilates any function $\dot f^1_S\in \C^\circ(G(F_S)^1)$ which vanishes on $\U_{L,d}(F_S)$, the required expression \eqref{TLf} will follow.

If $d \ge \dim(\U_L)$, then  $\U_{L,d}(F_S)'$ is the union of spaces $U_S$ such that $U(F)$ is not empty, and $T^L_d(\dot{f}^1_{r,s}) = T^L(\dot{f}^1_{r,s})$. In this case, $T^L_d(\dot{f}^1_{r,s})$ is the difference between the distribution obtained in Lemma \ref{JTU},
\[
J^L_\text{unip}(\dot{f}^1_{r,s}) = \sum_{U\in (\U_L)}J^L_U(\dot{f}^1_{r,s}),
\]
and a sum of integrals over $U(F_S)$ for which $U(F)$ is nonempty. Since $J^L_U$ is zero when $U(F)$ is empty, it follows that $T^L_d$ annihilates any function which vanishes on $\U_{L,d}(F_S)'$.

If $d > \dim (\U_L)$, assume inductively that $a^L_{r,s}(S,u)$ is defined for any $u$ with $\dim(U^L_u)>d$ and $^L_d$ annihilates any function which vanishes on $\U_{L,d}(F_S)'$. Let $\U_{L,d}^0$ be the union over orbits $U$ in $(\U_L)$ with $\dim(U)=d$, and let $C^d$ be the complement of $\U_{L,d}^0(F_S)$ in $\U_{L,d}(F_S)$. Thus $C^d$ equals to union over $v\in S$ and $U\in (\U_L)$ with $\dim(U)<d$ of the sets
\[
C^d_{U,v} = U(F_v) \prod_{\substack{w\in S\\ w\neq v}}\U_{L,d}(F_w).
\]
it is a closed subset of $L(F_S)^1$. We shall first consider the restriction of $T^L_d$ to the complement of $C^d$ in $L_S^1$. The space 
\[
\U_{L,d}(F_S)'\bs C^d = \U_{L,d}(F_S)'\cap \U_{L,d}^0(F_S)
\]
is a disjoint union of $L_S$-conjugacy classes which are closed in $L_S^1\bs C^d$. The conjugacy classes are parametrized by $u\in (\U_L(F))_{L,S}$ such that $\dim(U^L_u)=d$. For each such $u$, let $L_u$ be the centralizer in $L$ of a fixed representative of $u$ in $L(F)$. There is a surjective $L_S^1$-equivariant map
\[
\C^\circ(L_S^1)\bs C^d \to \bigoplus_u \C^\circ(L_S/L_{u,S}),
\]
with kernel consisting of functions in $\C^\circ(L_S^1\bs C^d)$ which vanish on $\U_{L,d}(F_S)'$. We may view any function that annihilates the kernel as the pullback of an $L_S^1$-equivariant distribution on the right-hand side. It follows then that we can choose constants $a^L_{r,s}(S,u)$ for each $u\in (\U_L(F))_{L,S}$ with $\dim(U^L_u)=d$ such that
\[
T^L_d(\dot{f}^1_{r,s}) = \sum_{\substack{u\in (\U_L(F))_{L,S}\\ \dim(U^L_u)=d}} a^L_{r,s}(S,u) J^L_L(\dot f^1_S)
\]
for any $\dot{f}^1_S\in \C^\circ(L_S^1\bs C^d)$. 

On the other hand, if $f$ is any arbitrary function in $\C^\circ(L_S^1)$, we set 
\[
T^L_{d-1}(\dot{f}^1_{r,s}) = T^L_d(\dot{f}^1_{r,s}) - \sum_{\substack{u\in (\U_L(F))_{L,S}\\ \dim(U^L_u)=d}} a^L_{r,s}(S,u) J^L_L(\dot f^1_S).
\]
Then $T^L_{d-1}(\dot{f}^1_{r,s})$ is an invariant distribution supported on $C^d$ which annihilates any function that vanishes on $\U_{L,d}(F_S)'$. By the inductive assumption, it will suffice to show that 
\[
T^L_{d-1}(\dot{f}^1_{r,s}) =0 
\] 
for any function $\dot{f}^1_{S}$ that vanishes on $\U_{L,d-1}(F_S)'$. Consider then the sets $C^d_{U,v}$ for which $\dot f^1_S$ does not vanish on a neighborhood of the closures $\bar{C}^d_{U,v}$. If no such set exists, then $\dot f^1_S$ belongs to $\C^\circ(L_S^1\bs C^d)$ and hence $T^L_{d-1}(\dot{f}^1_{r,s}) =0$. Otherwise, let there be $(k+1)$ such sets with $k\ge0$. Let $C^d_{U,v}$ be one such set. Then for any $\epsilon>0$, we have that $f^\epsilon_{U,v}$ is equal to $f$ in a neighborhood of $\bar{C}^d_{U,v}$, and $(f-f^\epsilon_{U,v})$ vanishes in a neighborhood of the closure of all but at most $k$ sets. We may assume inductively then that
\[
T^L_{d-1}(\dot{f}^1_{r,s}-(\dot{f}^1_{r,s})^\epsilon_{U,v}) = 0. 
\]
On the one hand, $T^L_{d-1}(\dot{f}^1_{r,s})$ is the difference between $J^L_\text{unip}(\dot{f}^1_{r,s})$ and 
\begin{align*}
&\sum_{\substack{M\subset L\\ M\neq L}} |W^M_0||W^L_0|^{-1} \sum_{u\in (\U_M(F))_{M,S}} a^M_{r,s}(S,u)J^L_M(u,\dot f^1_S)+ \sum_{\substack{u\in(\U_L(F)_{L,S}\\ \dim(U^L_u)\ge d}}a^L_{r,s}(S,u)J^L_L(u,\dot f^1_S).
\end{align*}
On the other hand, using the property that 
\[
\lim_{\epsilon\to0} J_M(u,(\dot{f}^1_{r,s})^\epsilon_{U,v}) = J_M(u,\dot{f}^1_{r,s}),
\]
if $U^G_u\subset \bar{U}$, otherwise it is zero, and  
\[
\lim_{\epsilon\to0} J^T_\text{unip}((\dot{f}^1_{r,s})^\epsilon_{U,v}) = \sum_{\{U' \in(\U_G): U' \subset \bar{U}\}}J^T_{U'}(\dot{f}^1_{r,s})
\]
for any valuation $v$, which follows from Lemma \ref{JTU} by the same argument as \cite[Corollary 4.3]{unip}, we then have that
\[
\lim_{\epsilon\to0} T_{d-1}^L((\dot{f}^1_{r,s})^\epsilon_{U,v}) 
\]
is equal to
\[
J^L_{\bar{U}}(\dot{f}^1_{r,s}) - \sum_{\substack{M\subset L\\M\neq L}} |W^M_0||W^L_0|^{-1}\sum_{\substack{u\in(\U_M(F))_{M,S}\\ U^L_u\subset \bar{U}}} a^M_{r,s,}(S,u)J^L_M(u,\dot f^1_S)
\]
for $\dim U<d$. Since $\dot{f}^1_{r,s}$ vanishes on $\U_{L,{d-1}}(F)'$, it follows from \eqref{JTUf} that $J^L_{\bar U}(\dot{f}^1_{r,s})=0$. The other terms in the preceding expression also vanish, thus $T^L_{d-1}(\dot{f}^1_{r,s})=0$ as desired. 
\end{proof}

Specializing to the case $L=G$, we have our desired expression for the unipotent contribution.

\begin{cor}
For any $\dot{f}^1_S\in \C^\circ(G(F_S)^1)$, we have 
\[
J_\mathrm{unip}(\dot{f}^1_{r,s}) =  \sum_{M\in\L}|W^M_0||W^G_0|^{-1}\sum_{u \in (\U_M(F))_{M,S}}a^M_{r,s}(S,\dot u )J_M(\dot u,\dot{f}^1_S).
\]
In particular, the restriction of $J_\mathrm{unip}$ to $G(F_S)^1$ is a measure.
\end{cor}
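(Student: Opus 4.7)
The plan is to deduce the identity directly by specializing Proposition \ref{arss} to $L = G$. In that case $J^G_\mathrm{unip}(\dot{f}^1_{r,s}) = J_\mathrm{unip}(\dot{f}^1_{r,s})$, $|W^L_0| = |W^G_0|$, and the sum over $M \in \L^L$ runs over all of $\L$, so the right-hand side of the proposition is exactly the expansion stated in the corollary. No further argument is required for the first assertion.

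For the ``in particular'' claim I would proceed as follows. The right-hand side is a finite sum: $\L$ is finite, and for each $M$ the set $(\U_M(F))_{M,S}$ of $(M,S)$-equivalence classes of unipotent $F$-points is finite. Next, for each unipotent $\dot u$, the weighted orbital integral $J_M(\dot u, \cdot)$ is given by integration of the test function against the $v_M$-weighted Haar measure on the $G_S$-orbit of a representative; the required absolute convergence on $\C^\circ(G_S^1)$ is the same input from \cite{FL} and \cite{artFT} that already entered the proof of Proposition \ref{arss}. Hence each $J_M(\dot u, \cdot)$ defines a (signed) Radon measure on $G_S^1$, and the right-hand side is a finite $\CC$-linear combination of such measures, which is again a measure.

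The only nontrivial point is the observation that unipotent weighted orbital integrals are measures rather than merely distributions; this ultimately rests on the polynomial character of the Arthur weight $v_M(x)$ in suitable coordinates along the unipotent orbit, which makes it locally integrable against the invariant measure on the orbit. I expect no real obstacle beyond this: the identity itself is a direct specialization, and the measure-theoretic statement is a routine packaging of the expansion obtained in Proposition \ref{arss}.
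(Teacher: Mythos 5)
Your argument is exactly the paper's: the identity is obtained by specializing Proposition \ref{arss} to $L=G$, and the measure statement is the routine observation that the right-hand side is a finite linear combination of unipotent weighted orbital integrals, each of which is a (signed) measure on $G_S^1$ by the temperedness/local integrability results already invoked. No discrepancy to report.
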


\subsection{Refinement}

Having treated the unipotent terms, the rest of the geometric expansion follows naturally. Let $M$ be a Levi subgroup of $G$, and $c$ a semisimple element of $M(F)$. For any $c\in G$, we denote by $G_{c,+}$ the centraliser of $c$ in $G$, and $G_c$ the connected component of the identity in $G_{c,+}$. We say a semisimple element $c$ is $F$-elliptic in $G$ if $A_{G_c}=A_G.$ We recall that two elements $\dot\gamma$ and $\dot\gamma_1$ in $G(F_S)$ with standard Jordan decompositions $\dot\gamma = c\dot\alpha$ and $\dot\gamma_1=c_1\dot\alpha_1$ are said to be $(G,S)$-equivalent if there is an element $\dot\delta\in G(F)$ such that $\dot\delta^{-1}c_1\delta = c$ and $\dot\delta^{-1}\alpha_1\delta$ is conjugate to $\alpha$ in $G_c(F_S)$. For a general element $\dot\gamma = c\dot\alpha$, we define the general coefficient by the descent formula
\be
\label{arsdesc}
a^G_{r,s}(S,\dot\gamma) = i^G(S,c)|\text{Stab}(c,\dot\alpha)|^{-1}a^{G_c}_{r,s}(S,\dot\alpha)
\ee
where $\text{Stab}(c,\dot\alpha)$ denotes the stabilizer of $\dot\alpha$ in the finite group $G_{c,+}(F)/G_c(F),$ and $i^G(S,c)$ is equal to 1 if $c$ is $F$-elliptic in $G$ and the $G(\A)^S$-conjugacy class of $c$ meets $K^S$, and is zero otherwise. 

\begin{prop}
\label{orbits}
Let $\dot{f}^1_S\in \C^\circ(G(F_S)^1)$. We then have
\[
J(\dot{f}^1_{r,s}) =  \sum_{M\in\L}|W^M_0||W^G_0|^{-1}\sum_{\dot\gamma\in (M(F))_{M,S}}a^M_{r,s}(S,\dot\gamma)J_M(\dot\gamma,\dot{f}^1_S).
\]
\end{prop}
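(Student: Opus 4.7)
The plan is to follow Arthur's derivation of \cite[Theorem 8.1]{orbits} essentially verbatim, but replacing the unit element $u^{S,1}$ by the basic function $b^{S,1}_{r,s}$ at unramified places and using Proposition \ref{arss} as the unipotent input. This will naturally produce the coefficients $a^M_{r,s}(S,\dot\gamma)$ for general $\dot\gamma$ via the descent formula \eqref{arsdesc}.

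First I would fix a class $\o\in\O$ and apply Arthur's inductive descent argument to $J_\o(\dot{f}^1_{r,s})$. Using the Jordan decomposition, every contribution to $J_\o$ can be expressed as an integral over $(G^0,S)$-equivalence classes whose semisimple parts lie in a distinguished $G^0(F)$-orbit. The combinatorial descent of \cite[\S 8]{orbits} then reduces each $J_\o(\dot f^1_{r,s})$ to an expression built from unipotent distributions $J^{G_c}_{\mathrm{unip}}$ on the centralizers $G_c$ of semisimple representatives $c\in\o$, applied to an appropriate descent of $\dot f^1_S$; the basic function's contribution at unramified places passes through the descent because $b^{S,1}_{r,s}$ is $K^S$-bi-invariant and the factor $i^G(S,c)$ in \eqref{arsdesc} correctly encodes the requirement that the $G(\A)^S$-conjugacy class of $c$ meets $K^S$.

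Next I would apply Proposition \ref{arss} on each centralizer $G_c$, producing the unipotent expansion with coefficients $a^{G_c}_{r,s}(S,u)$. The descent formula \eqref{arsdesc} then repackages these into coefficients $a^M_{r,s}(S,\dot\gamma)$ for general $\dot\gamma = c\dot\alpha$, and one obtains
\[
J_\o(\dot f^1_{r,s}) = \sum_{M\in\L}|W_0^M||W_0^G|^{-1}\sum_{\dot\gamma\in(M(F)\cap\o)_{M,S}}a^M_{r,s}(S,\dot\gamma) J_M(\dot\gamma,\dot f^1_S),
\]
so long as $S$ is large enough (the analogue of the role of $S_\o$ in the compactly supported setting).

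The main obstacle is summing over $\o\in\O$, since $\dot f^1_{r,s}$ is no longer compactly supported at places outside $V$. Here I would mirror the argument of Proposition \ref{geomprop}: use \cite[Theorem 7.1]{FL} to justify absolute convergence of $\sum_\o J_\o(\dot f^1_{r,s})$ for functions in $\C^\circ(G)$, introduce $\O_\Delta\subset\O$ consisting of classes meeting $\mathrm{ad}(G^0(\A))$ applied to the support of $\dot f^1_{r,s}$, and restrict the sum to this subset. Then by \cite[Theorem 5.2]{local} the weighted orbital integral $J_M(\dot\gamma,\dot f^1_S)$ vanishes unless the class of $\dot\gamma$ lies in $\O_\Delta$, which lets us interchange the sums over $\o$ and $\dot\gamma$ without loss and assemble the expression
\[
\sum_{M\in\L}|W_0^M||W_0^G|^{-1}\sum_{\dot\gamma\in(M(F))_{M,S}}a^M_{r,s}(S,\dot\gamma) J_M(\dot\gamma,\dot f^1_S),
\]
as required. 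The delicate part will be verifying that the convergence estimate from \cite{FL} applies uniformly with the basic function inserted at unramified places; since $b^{S,1}_{r,s}$ lies in $\C^\circ(G)$ for $\mathrm{Re}(s)$ large enough (by the lemma preceding Remark following \eqref{brs}), this uniformity is guaranteed and the argument closes.
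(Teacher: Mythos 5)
Your proposal is correct and follows essentially the same route as the paper: reduce to the per-class analogue of \cite[Theorem 8.1]{orbits}, descend through the combinatorics of \cite[\S 6]{orbits} to unipotent distributions on the centralizers $G_c$ (where Proposition \ref{arss} and the descent formula \eqref{arsdesc} supply the coefficients $a^M_{r,s}(S,\dot\gamma)$), and then sum over $\o$ exactly as in Proposition \ref{geomprop} using \cite[Theorem 7.1]{FL}. The only point you gloss over that the paper makes explicit is that within the descent step itself the integral over $y\in G_c(\A)\bs G(\A)$ must be shown absolutely convergent (via the rapid decay of $f$ and a weakened form of \cite[Lemma 6.1]{orbits}), but this is a refinement of the same argument rather than a different one.
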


\begin{proof}
The result will follow from the proof of \cite[Theorem 9.2]{orbits} if we can show that for each $\o\in \O$, there is a finite set $S_\o$ containing the archimedean places, such that for any finite set $S\supset S_\o$ and $f\in \C^\circ(G(F_S)^1)$,
\[
J_\o(f) =   \sum_{M\in\L}|W^M_0||W^G_0|^{-1}\sum_{\gamma\in (M(F)\cap\o)_{M,S}}a^M_{r,s}(S,\dot\gamma)J_M(\dot\gamma,\dot{f}^1_S)).
\]
This in turn follows from the proof of \cite[Theorem 8.1]{orbits}, where we need only indicate the changes that must be made in our setting. To that end, fix a semisimple element $c\in \o$ such that $c\in M_{P_1}(F)$ for a fixed standard parabolic $P_1$, and such that it does not belong to any proper parabolic subset of $M_1 = M_{P_1}$. Also let $\iota^G(c)=|G_{c,+}(F)/G_c(F)|$, and let $T$ be a suitably regular point in $\a_0$. Then following \cite[Lemma 3.1]{orbits}, the distribution $J_\o^T(f)$ can be expressed as the integral over $x$ in $G(F)\bs G(\A)^1$, and the sums over standard parabolic subgroups $R$ of $G_c$ and elements $\xi \in R(F)\bs G(F)$ of the product of
\[
|\iota^G(c)|^{-1}\sum_{u\in M_R(F)}\int_{N_R(\A)}f(x^{-1}\xi^{-1}c un\xi x) dn
\] 
with
\[
\sum_{P\in \F_R(M_1) }(-1)^{\dim (A_P/A_G)}\hat{\tau}_P(H_P(\xi x)-Z_P(T-T_0)-T_0).
\]
Here $\F_R(M_1)$ is the set of parabolic subsets $P$ with Levi factor $M_1$ with centralizer $P_c=R$, and $Z_P$ is defined in \cite[(3.3)]{orbits}.

Let $P_{1c} = P_1\cap G_c$, and let $M_{1c}$ be its Levi factor. Let $\F^c$ be the set of parabolic subgroups of $G_c$ containing $M_{1c}$. Let 
\[
\mathscr Y^T_R(\delta x,y) = \{Y^T_P(\delta x,y):P\in \F_R(M_1)\},
\]
where
\[
Y^T_P(\delta x,y) = -H_P(K_{P_c}(\delta x) y ) - Z_P(T-T_0)- T_c + T_0),
\]
and $K_{P_c}(x)$ is the component of $x$ in $K_c$ relative to the decomposition $G_c(\A) = N_{P_c}(\A)M_{P_c}(\A)K_c$. We would like to rewrite $J_\o^T(f)$ using a series of changes of variables as
\[
|\iota^G(c)|^{-1}\int_{G_c(\A)\bs G(\A)}\sum_{\{S\in \F^c: S \supset P_{1c}\}}\left(\int_{K_c}\int_{A^\infty_S\cap G(\A)^1}J_\text{unip}^{M_S,T_c}(\Phi^T_{S,a,k,y})da\ dk\right) dy
\]
where $\Phi^T_{S,a,k,y}(m), m \in M_S(\A)^1$ is given by
\[
\Gamma_S^G(H_S(a)- T_c, \mathscr Y^T_S(k,y))\delta_S(m)^\frac12\int_{N_S(\A)}f(y^{-1}c k^{-1}m n k y )dn
\]
and $\Gamma_S^G(X,\mathscr Y_R)$ is a compactly supported function on $X\in \a^G_R$, depending continuously on $\mathscr Y_R$ defined in \cite[\S4]{orbits}. We can do so by applying the combinatorial arguments of \cite[\S6]{orbits}, noting that the integral over $y$ remains absolutely integrable by the rapid decay of $f$, and that a weaker form \cite[Lemma 6.1]{orbits} holds by a similar argument (and simpler, as we do not require compactness). Namely, given a subset $\Delta$ of $G(\A)^1$ we can choose a subset $\Sigma$ of $G_c(\A)\bs G(\A)$ such that $y^{-1}c \U_{G_c}(\A) y \cap \Delta$ is empty unless $y$ belongs to $\Sigma$.  We can then choose $S_\o$ to be the finite set of valuations as described in \cite[p.203]{orbits} (see also \cite[p.193]{STF1}).
\end{proof}

We have the following formula for the global geometric coefficient in the case of semisimple elements. 

\begin{cor}
Let $\dot\gamma\in G(F)$ be a semisimple element. Then for any finite set $S\supset S_\o$, we have 
\[
a^G_{r,s}(S,\dot\gamma) = |G_{\dot\gamma,+}(F)/G_{\dot\gamma}(F)|^{-1} \vol(G_{\dot\gamma}\bs G_{\dot\gamma}(\A)^1)b^S_{r,s}(1)
\]
if $\dot\gamma$ is $F$-elliptic, and zero otherwise. 
\end{cor}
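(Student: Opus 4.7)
The plan is to combine the descent formula \eqref{arsdesc} with the identification of the unipotent coefficient at the identity for the centralizer $G_{\dot\gamma}$.

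For a semisimple element $\dot\gamma$, the Jordan decomposition takes the form $\dot\gamma = c\cdot 1$ with $c=\dot\gamma$ and trivial unipotent part $\dot\alpha=1$. The stabilizer $\text{Stab}(c,1)$ is therefore the whole of $G_{c,+}(F)/G_c(F)$, and \eqref{arsdesc} yields
\[
a^G_{r,s}(S,\dot\gamma) = i^G(S,c)\cdot |G_{c,+}(F)/G_c(F)|^{-1}\cdot a^{G_c}_{r,s}(S,1).
\]
By definition $i^G(S,c)$ equals $1$ precisely when $c$ is $F$-elliptic in $G$ and its $G(\A)^S$-conjugacy class meets $K^S$; for $S\supset S_\o$ the latter condition is automatic, giving the dichotomy in the statement. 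The remaining task is to identify
\[
a^{G_c}_{r,s}(S,1) = \vol(G_c(F)\bs G_c(\A)^1)\, b^S_{r,s}(1).
\]

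To do this I would apply Proposition \ref{arss} to the group $G_c$ at the trivial unipotent class $u=1$. In the iterative scheme of that proof the coefficient at $u=1$ is the last to be determined, corresponding to $\dim(U^{G_c}_1)=0$, and it is characterized by the property that for test functions $\dot{f}^1_S$ vanishing on every nontrivial unipotent orbit, the unipotent expansion collapses to
\[
a^{G_c}_{r,s}(S,1)\,J^{G_c}_{G_c}(1,\dot{f}^1_S) = a^{G_c}_{r,s}(S,1)\,\dot{f}^1_S(1).
\]
On the other hand, Arthur's classical evaluation of $J^{G_c}_\text{unip}$ on such a test function — applied here to $\dot{f}^1_{r,s}=\dot{f}^1_S\times b^{S,1}_{r,s}$ — isolates the identity contribution as
\[
\vol(G_c(F)\bs G_c(\A)^1)\,\dot{f}^1_{r,s}(1) = \vol(G_c(F)\bs G_c(\A)^1)\,\dot{f}^1_S(1)\,b^S_{r,s}(1),
\]
using the factorization of $\dot{f}^1_{r,s}$ at the identity. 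Comparing the two expressions identifies $a^{G_c}_{r,s}(S,1)$ as claimed, and substituting back into the descent formula yields the corollary.

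The main technical subtlety is the isolation of the trivial unipotent contribution within the iterative procedure of Proposition \ref{arss}. The point is that only the value $b^S_{r,s}(1)$ of the basic function enters — a direct consequence of evaluating $\dot f^1_{r,s}$ at the identity — so the formula is formally identical to Arthur's in the unit-element case with $u^V(1)=1$ replaced by $b^S_{r,s}(1)$. No further information about the basic function on nontrivial unipotent orbits is needed here; that more delicate analysis, relevant to the coefficients at nonsemisimple $\dot\gamma$, is the subject of Hoffmann's program.
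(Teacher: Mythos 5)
Your argument is correct and matches the paper's: both rest on the descent formula \eqref{arsdesc} combined with the identification $a_{r,s}(S,1)=\vol(\cdot)\,b^S_{r,s}(1)$ obtained by isolating the trivial-orbit contribution to the unipotent expansion of Proposition \ref{arss}, where only the value $\dot f^1_{r,s}(1)=\dot f^1_S(1)\,b^S_{r,s}(1)$ of the test function enters. The only (immaterial) difference is the order of operations: the paper first proves the formula for $\dot\gamma=1$ in $G$ itself (via the truncated kernel $\Lambda^T_d K_{\{1\}}$ and dominated convergence) and then descends, whereas you descend first and identify the coefficient at the identity of the centralizer $G_c$.
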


\begin{proof}
Let us first show that
\[
a^G_{r,s}(S,1) = a^G(S,1)b^S_{r,s}(1) = \mathrm{vol}(G(F)\bs G(\A)^1)b^S_{r,s}(1).
\]
Notice that if $U$ is the trivial unipotent class, then
\[
\int_{G(F)\bs G(\A)} \Lambda^T_dK_U(x,x)dx = \dot{f}^1_{r,s}(1) \int_{G(F)\bs G(\A)^1}F(x,T)dx,
\]
and by the dominated convergence theorem, we see that $J^T_{\{1\}}(\dot{f}^1_{r,s})$ is equal to the product of $\mathrm{vol}(G(F)\bs G(\A)^1)$ with $b^S_{r,s}(1)\dot{f}^1_S(1)$. From Proposition \ref{arss} we deduce the desired expression for trivial $\dot\gamma$, and the claim then follows from the descent formula \eqref{arsdesc}.
\end{proof}

\subsection{The invariant geometric expansion}

Following \cite[\S1]{STF1}, we now want to reindex the geometric terms in a different way.  Let $\D(G(F_V)^Z,\zeta_V)$ be the space of $\zeta_V$-equivariant distributions that are invariant under $G(F_V)^Z$-conjugation and supported on the preimage in $G(F_V)^Z$ of a finite union of conjugacy classes in $\overline{G}(F_V)^Z = G(F_V)^Z/Z(F_V)$. Let $\D_\text{orb}(G(F_V)^Z,\zeta_V)$ be the subspace spanned by distributions
\[
f \mapsto \int_{Z(F_V)}\zeta_V(z)f_G(z\gamma_V) dz, \qquad \gamma_V\in G(F_V)^Z
\]
where 
\[
f_G(\gamma_V)= |D(\gamma_V)|^{1/2}\int_{G_{\gamma_V}\cap G(F_V)^Z\bs G(F_V)^Z} f(x^{-1}\gamma_Vx) dx,
\]
for $G_{\gamma_V} = \prod_{v\in V} G_{\gamma_v}(F_v)$ and $|D(\gamma_V)|= \prod_{v\in V}|D(\gamma_v)|_v$ is the usual discriminant. Let $\Gamma(G(F_V)^Z,\zeta)$ be a fixed basis of $\D(G(F_V)^Z,\zeta_V)$, and let 
\[
\Gamma_\text{orb}(G(F_V)^Z,\zeta_V)= \Gamma(G(F_V)^Z,\zeta_V) \cap \D_\text{orb}(G(F_V)^Z,\zeta_V).
\]
Let $(\dot\gamma_S/\dot\gamma)$ be the ratio of the invariant measure on $\dot\gamma_S$ and the signed measure on $\dot\gamma_S$ that comes with $\dot\gamma$, so that 
\[
f_G(\dot\gamma_S) = (\dot\gamma_S/\dot\gamma) f_G(\dot\gamma),\qquad f\in \C^\circ(G(F_V)^Z,\zeta_V), \dot\gamma\in \Gamma_\text{orb}(\overline{G}(F_V)^Z,\zeta_V).
\]
For any $f\in \C^\circ(G,V,\zeta)$ and $\gamma\in \Gamma(M(F_V)^Z,\zeta_V)$, we inductively define the linear forms
\[
I_M(\gamma,f) = J_M(\gamma,f) -\sum_{L\in \L^0(M)}\hat{I}^L_M(\gamma,\phi_L(f))
\]
where $J_M(\gamma,f)$ is the generalized weighted orbital integral defined in \cite[\S2]{STF1} and $\phi_L$ is the map defined in \eqref{phi}.

We define  the elliptic coefficients
\be
\label{aell}
a^G_{r,s,\text{ell}}(\dot\gamma_S) = \sum_{\{\dot\gamma\}}|Z(F,\dot\gamma)|^{-1}a^G_{r,s}(S,\dot\gamma)(\dot\gamma/\dot\gamma_S)
\ee
where the sum over $\{\dot\gamma\}$ runs over a set of representatives of $Z_{S,\o}$-orbits in $(G(F))_{G,S}$, and $Z(F,\dot\gamma)$ is the subset of $z\in Z_{S,\o}$ such that $z\dot\gamma = \dot\gamma$. We note that $a^G_{r,s}(S,\dot\gamma)$ exists for any $S$-admissible element $\dot\gamma\in G(F)$, by \cite[Lemma 2.1]{STF1} and the analogue of \cite[Lemma 7.1]{orbits} for $f \in \C^\circ(G(F_S)^1)$ which follows from the proof of Proposition \ref{orbits}.

Let $V_\text{ram}(G,\zeta)$ be the finite set of valuations of $F$ outside of which $G$ and $\zeta$ are unramified.  We shall fix a subset $V$ of $S$ containing $V_\text{ram}(G,\zeta)$, and that $f$ is $S$-admissible in the sense of \cite[\S1]{STF1}. We specialize the test function $\dot{f}^r_s = f\times b^V_S$, hence
\[
f\to \dot{f}^r_s = f \times b^V_S
\]
gives a map from $\C^\circ(G,V,\zeta)$ to $\C^\circ(G,S,\zeta)$. We shall combine the elliptic coefficients with unramified weighted orbital integrals of basic functions at the places in $S-V$. Let $\K(\overline{G}(\A^V_S))$ denote the set of conjugacy classes in $\overline{G}(\A^V_S) = G(\A^V_S)/Z^V_S$ that are bounded in the sense that for any $v$ in $V-S$, the image of any representative lies in a compact subgroup of $G(F_v)$. Any element $k\in\K(\overline{G}(\A^V_S))$ induces a distribution $\gamma^V_S(k)$ in $\Gamma_\text{orb}(G(\A^V_S),\zeta^V_S)$. Given $\gamma$ in $\Gamma(G(F_V)^Z,\zeta_V)$, we write 
\[
\gamma\times k = \gamma\times \gamma^V_S(k)
\]
for the associated element in $\Gamma(G(F_S)^Z,\zeta_S)$. Furthermore, let $\K^V_\text{ell}(\bar{M},S)$ denote the elements in $\K(\overline{G}(\A^V_S))$ such that $\gamma\times k$ belongs to $\Gamma_\text{ell}(G,S,\zeta)$ for some $\gamma$. Here $\Gamma_\text{ell}(G,S,\zeta)$ is the set of $\gamma$ in $\Gamma_\text{orb}(G(F_S)^Z,\zeta_S)$ such that there is a $\dot\gamma\in G(F)$ such that
\ben
\item[(i)] the semisimple part of $\dot\gamma$ is $F$-elliptic in $G$, 
\item [(ii)] the conjugacy class of $\dot\gamma$ in $G(F_v)$ maps to $\gamma$, and 
\item[(iii)] $\dot\gamma$ is bounded at each $v\not\in S$.
\een
 We can then define the unramified weighted orbital integrals
\be
\label{rkb}
r^G_M(k,b) = J_M(\gamma^V_S(k),b^V_S), \qquad k\in \K(\bar{M}(\A^V_S)).
\ee
Now the set $\Gamma(G,V,\zeta)$ is given by the union of induced distributions $\mu^G$ where $\mu$ runs over elements in $\Gamma_\text{ell}(M,V,\zeta)$ and $M$ runs over Levis in $\L$. Recall that the induction is defined by the relation 
\[
f_G(\mu^G) = f_M(\mu),\qquad f\in \C^\circ(G(F_V),\zeta_V), 
\]
and $f\mapsto f_M$ is the canonical map from $\C^\circ(G(F_V),\zeta_V)$ to $I\C^\circ(G(F_V),\zeta_V)$ factoring through the map $f\mapsto f_G$. We also have the adjoint restriction map $\gamma\mapsto \gamma_M$ from $\D(G(F_V),\zeta_V)$ to $\D(M(F_V),\zeta_V)$, such that
\be
\label{muG}
\sum_{\gamma\in\Gamma(G(F_V),\zeta_V)}a_M(\gamma_M)b_G(\gamma) = \sum_{\mu\in\Gamma(M(F_V),\zeta_V)}a_M(\mu)b_G(\mu^G)
\ee
for any linear function $a_M$ on $\D(M(F_V),\zeta_V)$ and $b_G$ on $\D(G(F_V),\zeta_V)$. We then define for any $\gamma\in \Gamma(G(F_V)^Z,\zeta_V)$, the geometric coefficient
\be
\label{ars}
a^G_{r,s}(\gamma) =\sum_{M\in\L}|W^M_0||W^G_0|^{-1}\sum_{k\in \K^V_\text{ell}(\bar{M},S)}a^M_{r,s,\text{ell}}(\gamma_M\times k)r^G_M(k,b)
\ee
where $S$ is any finite set of valuations of containing $V$ such that $\gamma\times K^V$ is $S$-admissible. It follows from the definitions that $a^G_{r,s}(\gamma)$ is supported on the discrete subset $\Gamma(G,V,\zeta)$ of $\Gamma(G(F_V)^Z,\zeta_V)$.

\begin{thm}
\label{Igeom}
Let $f\in \C^\circ(G,V,\zeta)$. Then the invariant linear form $I^r_s(f)$ has the geometric expansion
\be
\label{Irsg}
I^r_s(f) = \sum_{M\in\L}|W^M_0||W^G_0|^{-1}\sum_{\gamma\in\Gamma(M,V,\zeta)} a^M_{r,s}(\gamma)I_M(\gamma,f)
\ee
\end{thm}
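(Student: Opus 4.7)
The plan is to mimic Arthur's deduction of the invariant geometric expansion from the noninvariant one in \cite{STF1}, modified to track the basic function at unramified places. I argue by induction on $\dim(G/Z)$, assuming the stated expansion for all proper Levi subgroups $L\in\L^0$.

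The starting point is the refined noninvariant expansion
\[
J^r_s(f) = \sum_{M\in\L}|W^M_0||W^G_0|^{-1}\sum_{\dot\gamma\in(M(F))_{M,S}}a^M_{r,s}(S,\dot\gamma)J_M(\dot\gamma,\dot{f}^1_S)
\]
supplied by Proposition \ref{orbits} combined with the passage \eqref{fzeta2} through $\zeta$-integration, valid for any sufficiently large finite set $S\supset V$. The first task is to reindex this expansion so that the outer summation runs over $\gamma\in\Gamma(M,V,\zeta)$, absorbing the basic function contributions at the places $v\in S-V$ into the unramified weighted orbital integrals $r^G_M(k,b)$ of \eqref{rkb}. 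Concretely, one splits each $(M,S)$-equivalence class $\dot\gamma$ into a pair $(\gamma,k)$, where $\gamma\in\Gamma_\text{ell}(M,V,\zeta)$ records the $V$-level data and $k\in\K^V_\text{ell}(\bar{M}^V_S)$ records the bounded conjugacy class at the places in $S-V$. Using the descent formula \eqref{arsdesc} and the product decomposition of $\dot{f}^1_S$ into its $V$ and $S-V$ factors, the inner sum reorganizes into a sum of products of the elliptic coefficients $a^M_{r,s,\text{ell}}$ defined in \eqref{aell} with the unramified integrals $r^G_M(k,b)$. The adjoint induction relation \eqref{muG} then allows one to rewrite the result as a sum over $\gamma\in\Gamma(M,V,\zeta)$ weighted by the coefficient $a^M_{r,s}(\gamma)$ as defined in \eqref{ars}.

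Once $J^r_s(f)$ has been reindexed in this way, the passage to the invariant expansion is formal. By the inductive definition \eqref{irs},
\[
I^r_s(f) = J^r_s(f) - \sum_{L\in\L^0}|W^L_0||W^G_0|^{-1}\hat{I}^{r,L}_s(\phi_L(f)),
\]
and the induction hypothesis supplies the geometric expansion of each $\hat{I}^{r,L}_s$. Substituting these expansions and collecting terms indexed by the same $\gamma$, the outer summand $J_M(\gamma,f)$ combines with the subtracted terms to produce exactly
\[
I_M(\gamma,f) = J_M(\gamma,f) - \sum_{L\in\L^0(M)}\hat{I}^L_M(\gamma,\phi_L(f)),
\]
yielding the required expansion \eqref{Irsg}.

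The main obstacle is the reindexing step. One must verify that the decomposition of $a^M_{r,s}(S,\dot\gamma)J_M(\dot\gamma,\dot{f}^1_S)$ into its $V$ and $S-V$ pieces is compatible with the descent formula \eqref{arsdesc} and with the definition of $a^M_{r,s,\text{ell}}$ in \eqref{aell}; this is the analogue of the corresponding identity in Arthur's \cite{STF1}, but here one must also check that the presence of the noncompactly supported basic function at the places in $S-V$ does not disrupt the manipulations. Since $b^V$ is not compactly supported, every step of the reindexing must be grounded in the absolute convergence results provided by Lemma \ref{JTU} and Proposition \ref{arss}, together with the locally finite description \eqref{brs} of the basic function on $G(F_v)$ for $v\notin V$.
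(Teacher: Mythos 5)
Your proposal follows essentially the same route as the paper: the refined noninvariant expansion of Proposition \ref{orbits}, the $\zeta$-integration and regrouping into the elliptic coefficients \eqref{aell}, the separation of the $V$ and $S-V$ contributions, the reindexing via \eqref{muG} to produce \eqref{ars}, and the standard inductive conversion to the invariant form. The one place where the paper is more precise than your sketch is the "reindexing" step: the decomposition of $J_M(\dot\gamma_S,\dot f_S)$ is not a simple product over places but Arthur's splitting formula $J_M(\dot\gamma_S,\dot f_S)=\sum_{L\in\L(M)}J^L_M(\dot\gamma^V_S,(b^V_S)_L)J_L(\dot\gamma^L_V,f)$, which requires the observation that the parabolic descent $(b^V_S)_Q$ of the basic function is independent of $Q\in\P(L)$ — a fact read off from the explicit formula \eqref{brs}.
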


\begin{proof}
Recall from Proposition \ref{orbits} the expression
\[
J(f^r_s) = J(\dot{f}_{r,s}^1) =  \sum_{M\in\L}|W^M_0||W^G_0|^{-1}\sum_{\dot\gamma\in (M(F_S))_{F,S}}a^M_{r,s}(S,\dot\gamma)J_M(\dot\gamma,\dot{f}^1_S).
\]
For a fixed set of valuations $S$, the linear form $J(\dot{f}^1)$ is $K^S$-invariant, we may then write
\[
J(f^r_s) = \int_{Z(F)Z(\o^S)\bs Z(\A)^1}J(\dot{f}^1_{r,s,z})\zeta(z)dz
\]
as 
\[
 \sum_{M\in\L}|W^M_0||W^G_0|^{-1}\sum_{\dot\gamma\in (M(F))_{M,S}}a^M_{r,s}(S,\dot\gamma)\int_{Z_{S,\o}\bs Z^1_S}J_M(z\dot\gamma,\dot{f}^1_S)\zeta(z)dz
\]
since $Z(\A) = Z(F)Z(F_S)Z(\o^S)$ and $J_M(\dot\gamma,\dot{f}^1_{S,z}) = J_M(z\dot\gamma,\dot{f}^1_S)$ for any $z\in Z(F_S)$. The inner sum can be written as
\[
\sum_{\{\dot\gamma\}}|Z(F,\dot\gamma)|^{-1}a^M_{r,s}(S,\dot\gamma)\int_{Z^1_S}J_M(z\dot\gamma,\dot{f}^1_S)\zeta(z)dz
\]
Then applying the definition of \eqref{aell}, we have
\be
\label{jrs}
J^r_s(f) = \sum_{M\in\L}|W^M_0||W^G_0|^{-1}\sum_{\dot\gamma_S\in\Gamma_\text{ell}(M,V,\zeta)} a^M_{r,s,\text{ell}}(\dot\gamma_S)J_M(\dot\gamma,\dot f_S)
\ee
where $\dot f_S$ is the projection of $\dot{f}_S^1$ onto $\H(G,S,\zeta)$.

Since $\dot{f}_S$ is equal to $f \times b^V_S$, we claim that
\[
J_M(\dot\gamma_S,\dot{f}_S) = \sum_{L\in \L(M)}J^L_M(\dot\gamma^V_S,(b^V_S)_L)J_L(\dot\gamma^L_V,f).
\]
This follows from the descent and splitting properties of the weighted orbital integrals, stated in (18.7) and (18.8) of \cite{Art} or, more directly in \cite[VI.1.9(2)]{MW2}). Here we use the fact that $(b^V_S)_L=(b^V_S)_Q$ is independent of $Q\in \P(L)$, where
\[
f_Q(m) = \delta_Q(m)^\frac12\int_K \int_{N_Q(F_S)}f(k^{-1} mnk) dn\ dk,
\]
since by \eqref{brs} we see that the basic function depends only on the restriction to $T(F_v)_+$ for each $v\in V - S$. Moreover, $J^L_M(\dot\gamma^V_S,(b^V_S)_L)$ vanishes unless $\dot\gamma^V_S = \dot\gamma^V_S(k)$ for some $k\in \K(\bar{M}(\A^V_S))$, in which case it equal $r^L_M(k,b)$ by definition. Hence $a^M_{r,s,\text{ell}}(\dot\gamma_S)$ vanishes unless $\mu = \dot\gamma_V$ lies in $\Gamma_\text{ell}(M,V,\zeta)$ and $k$ lies in $\K^V_\text{ell}(\bar{M},S)$. We can thus write \eqref{jrs} as
\[
\sum_{L\in\L}\sum_{M\in\L^L}|W^M_0||W^G_0|^{-1}\sum_{\mu\in\Gamma_\text{ell}(M,V,\zeta)}\sum_{k\in \K^V_\text{ell}(\bar{M},S)} a^M_{r,s,\text{ell}}(\mu\times k)r^L_M(k,b)J_L(\mu^L,f).
\]
Using the property \eqref{muG} and the definition \eqref{ars} it follows that the inner sum can be expressed as
\begin{align*}
&\sum_{M\in\L}|W^M_0||W^G_0|^{-1}\sum_{\gamma\in\Gamma(L,V,\zeta)}\sum_{k\in\K^V_\text{ell}(\bar{M},S)} a^M_{r,s,\text{ell}}(\gamma_M\times k)r^L_M(k,b)J_L(\gamma,f)\\
&=|W^M_0||W^G_0|^{-1}\sum_{\gamma\in\Gamma(L,V,\zeta)}a^L_{r,s}(\gamma) J_L(\gamma,f).
\end{align*}
Writing $M$ for $L$ in the preceding expression, the geometric expansion of $J^r_s(f)$ can thus be written as 
\[
J^r_s(f) =  \sum_{M\in\L}|W^M_0||W^G_0|^{-1}\sum_{\gamma\in\Gamma(M,V,\zeta)}a^M_{r,s}(\gamma)J_M(\gamma,f).
\]

Converting this expansion for $J(f^r_s)$ into an expansion for $I(f^r_s)$ is standard. Recall from the definition in \eqref{irs} that
\[
I^r_s(f) = J^r_s(f) - \sum_{M\in \L^0}|W^M_0||W^G_0|^{-1}\hat{I}^{r,M}_s(\phi_M(f)).
\]
Assume inductively that the expansion \eqref{Irsg} holds if $G$ is replaced by any proper Levi $L\in \L^0$. Then using the expansion we have just obtained for $J^r_s(f)$, we see that
\[
I^r_s(f) =  \sum_{M\in\L}|W^M_0||W^G_0|^{-1}\sum_{\gamma\in\Gamma(M,V,\zeta)}a^M_{r,s}(\gamma)\left(J_M(\gamma,f) - \sum_{L\in \L(M)}\hat{I}^L_M(\gamma,\phi_L(f))\right).
\]
By the definition of $I_M(\gamma,f)$ then this is equal to 
\[
\sum_{M\in\L}|W^M_0||W^G_0|^{-1}\sum_{\gamma\in\Gamma(M,V,\zeta)}a^M_{r,s}(\gamma)I_M(\gamma,f)
\]
as required.
\end{proof}

\begin{cor}
The coefficients $a^G_{r,s}(\gamma)$ are independent of $S$.
\end{cor}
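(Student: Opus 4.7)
The strategy is to exploit the uniqueness of the geometric expansion \eqref{Irsg} together with the fact that the left-hand side $I^r_s(f)$ depends only on $V$ (which is fixed, containing $V_{\text{ram}}(G,\zeta)$), and not on the auxiliary finite set $S\supset V$ used to construct the coefficients.

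Concretely, let $S_1, S_2 \supset V$ be two finite sets of places for which $\gamma\times K^V$ is admissible, giving two \emph{a priori} different coefficients $a^{M}_{r,s,S_1}(\gamma)$ and $a^{M}_{r,s,S_2}(\gamma)$ via \eqref{ars}. Applying Theorem \ref{Igeom} with each choice and subtracting, we obtain the identity
\[
0 = \sum_{M\in\L}|W^M_0||W^G_0|^{-1}\sum_{\gamma\in\Gamma(M,V,\zeta)}\bigl(a^{M}_{r,s,S_1}(\gamma)-a^{M}_{r,s,S_2}(\gamma)\bigr)I_M(\gamma,f)
\]
for every $f\in\C^\circ(G,V,\zeta)$. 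The task is then to conclude that each coefficient difference vanishes.

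The main step is therefore a linear independence argument for the distributions $\{I_M(\gamma,\cdot) : M\in\L,\ \gamma\in\Gamma(M,V,\zeta)\}$, which I would carry out by induction on $\dim A_M/A_G$. For $M=G$, the distribution $I_G(\gamma,f) = f_G(\gamma)$ is the invariant orbital integral, and standard separation of orbital integrals (for fixed central character) shows that coefficients indexed by distinct $\gamma\in\Gamma(G,V,\zeta)$ cannot cancel; so $a^{G}_{r,s,S_1}(\gamma)=a^{G}_{r,s,S_2}(\gamma)$ for all $\gamma$. For the inductive step, one uses the parabolic descent property of the normalized weighted orbital integrals $I_M(\gamma,f)$ to isolate the contribution from a given proper Levi $M$ by evaluating on test functions supported near induced classes $\gamma^G$, just as in the proof of \cite[Proposition 2.2]{STF1}. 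The inductive hypothesis then forces the coefficient differences at level $M$ to vanish as well.

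The only real obstacle is the linear independence step at proper Levis, but since $\phi_L$ maps $\C^\circ(G,V,\zeta)$ to $I\C(L,V,\zeta)$ surjectively onto a dense subspace (Proposition \ref{invcont}), we have enough test functions to separate the terms. The conclusion is that $a^{M}_{r,s,S_1}(\gamma) = a^{M}_{r,s,S_2}(\gamma)$ for every $M$ and every $\gamma\in\Gamma(M,V,\zeta)$, and in particular $a^G_{r,s}(\gamma)$ is independent of $S$.
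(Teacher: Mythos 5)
There is a genuine gap in your argument. Your starting point is fine: subtracting the two expansions of $I^r_s(f)$ for $S_1$ and $S_2$ (the left-hand side being independent of $S$ because $J^r_s(f)$ is) gives a vanishing linear combination of the distributions $I_M(\gamma,\cdot)$. But everything then rests on a linear independence statement for the \emph{entire} family $\{I_M(\gamma,\cdot): M\in\L,\ \gamma\in\Gamma(M,V,\zeta)\}$, and your proposed proof of that statement does not work as described. Your base case takes $M=G$ and invokes separation of ordinary orbital integrals to conclude $a^{G}_{r,s,S_1}(\gamma)=a^{G}_{r,s,S_2}(\gamma)$ --- but at that stage the identity still contains the terms from every proper Levi, so separating the $\gamma$'s within the $M=G$ block tells you nothing until the $M\neq G$ contributions have been removed. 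Running the induction from $M=G$ outward is therefore circular. Moreover, the density/surjectivity claim you attribute to Proposition \ref{invcont} for the maps $\phi_L$ is not established there, and a genuine linear independence of weighted invariant distributions across Levis is a substantially harder assertion than the separation of invariant orbital integrals; the paper never needs it.

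The paper's proof sidesteps all of this by inducting on the group rather than on the Levi index inside the expansion: the coefficients $a^M_{r,s}(\gamma)$ for proper $M$ are the same objects defined by \eqref{ars} with $M$ in place of $G$, so the corollary applied to $M$ (the inductive hypothesis) already gives their $S$-independence; since the local distributions $I_M(\gamma,f)$ carry no $S$-dependence, every $M\neq G$ term of \eqref{Irsg} is independent of $S$, and hence so is the remaining $M=G$ term $\sum_{\gamma} a^G_{r,s}(\gamma)f_G(\gamma)$. At that point only the linear independence of the invariant orbital integrals $f\mapsto f_G(\gamma)$ over $\gamma\in\Gamma(G,V,\zeta)$ is needed, which holds because these $\gamma$ form (part of) a basis of the space of distributions $\D(G^Z_V,\zeta_V)$. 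If you replace your linear independence step by this induction on the group, your argument becomes the paper's.
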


\begin{proof}
The linear form $I^r_s(f)$ is constructed from the noninvariant form $J^r_s(f)$, which is independent of $S$. Assume inductively that for any proper Levi subgroup $M$ of $G$, the coefficients $a^M_{r,s}(\gamma)$ are independent of $S$. Then the terms corresponding to the $M$ in \eqref{Irsg} are independent of $S$, thus so is the term corresponding to $G$, which is
\[
\sum_{\gamma\in\Gamma(M,V,\zeta)} a^G_{r,s}(\gamma)f_G(\gamma).
\]
It follows then that the $a^G_{r,s}(\gamma)$ are independent of $S$.
\end{proof}

\section{Weighting the spectral side}
\label{invspec}

On the spectral side, we have the sum
\[
J(\dot{f}^1) = \sum_{\chi\in\mathfrak X} J_\chi(\dot{f}^1),
\]
whose summands are also obtained by evaluating the polynomials $J_\chi^{T}(\dot{f}^1)$ at a distinguished point $T\in \a_0$. They are simpler to treat than the geometric expansion, given the absolute convergence of the spectral side in hand, together with the results of \cite{FLM} which concern the refined noninvariant spectral expansion. 

\subsection{Refinement} Let $\Pi_\text{unit}(M(\A)^1)$ be the set of unitary representations of $M(\A)^1$. Given an element $\dot\pi\in \Pi_\text{unit}(M(\A)^1)$ and $\lambda\in i\a^*_M$, we form the representation $\pi_\lambda$ by multiplying by $e^{\lambda (H_P(\cdot))}$ and the induced representation $\I_P(\dot\pi_\lambda)$ for any $P\in\P(M)$. Let $W^L(M)_\text{reg}$ be the subset of regular elements in $W^L(M)$, {that is, elements in $W^L(M)$ with kernel equal to $\a_L$.} Also let $\a^*_{G,Z}$ be the subspace of linear forms on $\a_G$ that are trivial on the image of $\a_Z$ in $\a_G$. For any $Q\in \P(M)$ and $s\in W(M)$ there is a global (unnormalized) unitary intertwining operator $J_{Q|P}(s,\dot\pi_\lambda)$ from $\H_P(\dot\pi)$ to $\H_Q(\dot\pi).$ We set \[
J_{Q|P}(1,\dot\pi_\lambda) = J_{Q|P}(\dot\pi_\lambda),\qquad J_P(s,0) = J_{P|P}(s,\dot\pi_{\lambda+\Lambda}).
\]
We recall that there is a discrete subset $\Pi_\disc(G(\A_F)^1)$ that supports a finite linear combination of characters
\[
I_\disc(\dot{f}^1) = \sum_{\dot\pi\in \Pi_\disc(G(\A_F)^1)}a^G_\disc(\dot\pi) \dot f^1_G(\dot\pi), 
\]
with $\dot f^1\in\C^\circ(G(\A_F)^1)$. The spectral coefficients $a^G_\disc(\dot\pi) $ are the multiplicities defined by the spectral expansion of the discrete part 
\[
I_\disc(\dot f^1) = \sum_{M\in\L}|W^M_0||W^G_0|^{-1}\sum_{s\in W^L(M)_\text{reg}}|\det(s-1)_{\a^G_M}|^{-1}\tr(J_P(s,0)\I_P(\dot\pi,\dot{f}^1))
\]
in \cite[\S4]{ITF2}, written as a linear combination of characters. 

We begin by recording the following spectral expansion of $J(\dot{f}^1_{r,s})$. 

\begin{prop}
\label{eis}
Let $\dot{f}^1_V\in \C^\circ(G(F_V)^1)$. We then have
\[
J(\dot{f}^1_{r,s}) =  \sum_{M\in\L}|W^M_0||W^G_0|^{-1}\sum_{\dot\pi\in \Pi_\mathrm{disc}(M)}a^M_\mathrm{disc}(\dot\pi_\lambda)\int_{i\a_{M,Z}^*/i\a_{G,Z}^*}J_M(\dot\pi_\lambda,\dot{f}^1_{r,s})d\lambda.
\]
\end{prop}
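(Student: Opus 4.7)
The plan is to apply the fine spectral expansion of the noninvariant trace formula directly to the test function $\dot f^1_{r,s}$, treating it as an element of $\C^\circ(G)$ rather than carrying out any new analytic work. By \eqref{frs} and the discussion at the end of Section \ref{basic}, the function $\dot f^1_{r,s} = f\times b^V_{r,s}$ belongs to $\C^\circ(G)$ for $\mathrm{Re}(s)$ large enough, since $b^r_{v,s}$ lies in $\H_{\mathrm{ac}}(G_v,K_v)\subset \C^\circ(G_v)$ at each $v\notin V$ and the Euler product defining $b^V$ is locally finite once a determinant homomorphism $\nu$ is fixed. Thus everything needed for the expansion is already in hand; the proposition is essentially a specialization of the noninvariant refined spectral expansion to this particular test function.

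The first step is to invoke the fine spectral expansion of \cite[Theorem 8.2]{arthureis}, together with the absolute convergence and continuity results of \cite[Corollary 1]{FLM} and the reorganization in \cite[\S5.3]{FLM}, to conclude that for any function $\dot h \in \C^\circ(G)$ one has
\[
J(\dot h) = \sum_{M\in\L}|W^M_0||W^G_0|^{-1}\sum_{\dot\pi\in\Pi_{\mathrm{disc}}(M)}\int_{i\a^*_{M,Z}/i\a^*_{G,Z}}a^M_{\mathrm{disc}}(\dot\pi_\lambda)J_M(\dot\pi_\lambda,\dot h)\,d\lambda,
\]
where the sums over $M$ and $\dot\pi$ are finite on every compact piece and the inner integral converges absolutely with respect to the trace norm. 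This is precisely the content of the intermediate expansion \eqref{jspec1} obtained in the proof of Proposition \ref{Jspecprop}, before the further passage to $a^M(\pi)$.

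The second step is then to take $\dot h = \dot f^1_{r,s}$ and observe that the expression on the right-hand side is exactly the claimed expansion. The only point to check is that the estimates of \cite[\S5.1]{FLM} used to control the sum over $\dot\pi\in\Pi_{\mathrm{disc}}(M)$, and those of \cite[\S5.3]{FLM} used to control the contribution of the intertwining operators $\J_L(P,\lambda)J_P(s,0)$, apply uniformly to the test function $\dot f^1_{r,s}$. This is where the basic function enters nontrivially: at each $v\notin V$ the factor $b^r_{v,s}$ acts on an unramified $\pi_v$ by the scalar $L(s,\pi_v,r)$, and the operator $\I_P(\dot\pi_\lambda,\dot f^1_{r,s})$ decomposes as the product of $\I_P(\pi_{V,\lambda},f)$ with this local $L$-value over $v\notin V$. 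Since the seminorms on $\C^\circ(G)$ used in \cite{FLM} behave well under such tensor product decompositions and $b^V$ lies in $\C^\circ(G^V)$ in the appropriate sense, the convergence statements transfer verbatim.

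The main (mild) obstacle I anticipate is precisely this last point: ensuring that the absolute convergence of \cite{FLM}, stated for general test functions in $\C^\circ(G)$, genuinely applies to the non-compactly supported factor $b^V$ in a way that respects the product structure. This should follow from the locally finite description \eqref{brs} of $b^r_s$ and the fact that the convergence of $\sum_n b^r_n q^{-ns}$ is absolute and bounded in the relevant seminorms for $\mathrm{Re}(s)$ large, but it is the only step that requires more than a direct citation.
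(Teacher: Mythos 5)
Your proposal is correct and follows essentially the same route as the paper: both invoke the fine spectral expansion of \cite[Theorem 8.2]{arthureis} together with the absolute convergence results of \cite{FLM} for test functions in $\C^\circ(G)$, and then pass to the coefficients $a^M_{\mathrm{disc}}$ by the argument of \cite[Theorem 4.4]{ITF2}, finally specializing to $\dot h=\dot f^1_{r,s}$. The ``mild obstacle'' you flag at the end is already disposed of by the earlier lemma showing $b^r_s\in\H_{\mathrm{ac}}(G,K)\subset\C^\circ(G)$ for $\mathrm{Re}(s)$ large, so that $\dot f^1_{r,s}$ itself lies in $\C^\circ(G)$ and the estimates of \cite{FLM} apply directly, with no need to transfer them through the product structure.
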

\begin{proof}
We recall that $J_\chi(\dot{f}^1)$ is equal to the sum over $M\in\L$ of the product of 
\[
|W^M_0||W^G_0|^{-1}|\det(s-1)_{\a^G_M}|^{-1}
\]
with
\[
\sum_{\pi\in\Pi_\text{unit}(M(\A)^1)}\sum_{L\in\L(M)}\sum_{s\in W^L(M)_\text{reg}}\int_{i\a^*_L/i\a^*_G}\tr(\J_L(\dot\pi_\lambda,P)J_P(s,0)\I_{P}(\dot\pi_\lambda,\dot{f}))d\lambda,
\]
as in the proof of Proposition \ref{Jspecprop}. Here 
\[
\J_L(\dot\pi_\lambda,P) = \lim_{\Lambda\to0}\sum_{Q\in\P(M)} \J_Q(\Lambda,\dot{\pi}_\lambda,P)\theta_Q(\Lambda)^{-1},
\]
for $\Lambda\in i\a^*_M$ near to 0, is the limit of $(G,M)$-families
\[
\J_Q(\Lambda,\dot{\pi}_\lambda,P) = J_{P|Q}(\dot{\pi}_\lambda)^{-1}J_{Q|P}(\dot{\pi}_{\lambda+\Lambda}).
\]
Then the required formula follows by the same argument in the proof of \cite[Theorem 4.4]{ITF2}.
\end{proof}

\subsection{The invariant spectral expansion}

As before, we have to convert this expansion to a distribution on $G(F_V)^Z$. We continue to assume that $V$ contains $V_\text{ram}(G,\zeta)$. Let $\F(G(F_V)^Z,\zeta_V)$ be the space of finite complex linear combinations of irreducible characters on $G(F_V)^Z$ with $Z(F_V)$-central character equal to $\zeta_V$, with a canonical basis $\Pi(G(F_V)^Z,\zeta_V)$ of irreducible characters. We identify elements $\pi\in \F(G(F_V)^Z,\zeta_V)$ with the linear form 
\[
f\mapsto f_G(\pi) =\tr(\pi(f))
\]
on $\H(G(F_V)^Z,\zeta_V)$. We write $\Pi_\text{unit}(G(F_V)^Z,\zeta_V)$ for the subset of unitary characters. The orbits of the action 
\[
\pi\mapsto \pi_\lambda,\qquad \lambda\in i\a^*_{G,Z}
\]
 on $\pi\in \Pi_\text{unit}(G(F_V),\zeta_V)$ can be identified with the set $\Pi_\text{unit}(G(F_V)^Z,\zeta_V)$. We also define the induced characters 
\[
f_M(\pi)= f_G(\pi^G)=\tr(\I_P(\pi,f)),\qquad \pi\in \Pi_\text{unit}(M(F_V),\zeta_V).
\]
 Given any $f\in\H(G,V,\zeta)$ and $\pi\in \Pi_\text{unit}$, we inductively define the linear form
\[
I_M(\pi,f) = J_M(\pi,f) - \sum_{M\in\L^0(M)}\hat{I}^L_M(\pi,\phi_L(f))
\]
where $J_M(\pi,f)$ is the weighted character defined in \cite[\S3]{STF1} and $\phi_L$ is the map defined in \eqref{phi}.

We shall combine the spectral coefficients with unramified characters using basic functions at places outside of $V$. Let us define $\Pi_\disc(G(\A_F)^Z,\zeta)$ to be the set of representations in $\Pi(G(\A)^Z,\zeta)$ whose restrictions to $G(\A)^1$ lie in $\Pi_\disc(G(\A_F)^1)$. It can be identified with the representations in $\Pi_\disc(G(\A_F)^1)$ whose central character on $Z(\A)^1$ is equal to $\zeta$.  We then define $\Pi_\disc(G,V,\zeta)$ to be the subset of $\pi \in \Pi(G(F_V)^Z,\zeta_V)$ such that $\pi\times c$ belongs to $\Pi_\disc(G(\A_F)^Z,\zeta)$ for some $c\in C(G(\A^V),\zeta^V)$. Now given $c\in C(G(\A^V),\zeta^V)$, there is a natural action of $\lambda \in \a^*_{G,Z,{\bf C}}$ sending 
\[
c\to c_\lambda = \{c_{v,\lambda}:v\not\in V\}. 
\]
Let $\pi\times c = \pi\otimes \pi^V(c)$ denote the associated representation in $\Pi(G(\A),\zeta)$, and let $C_\text{disc}(G(\A_F)^Z,\zeta)$ denote the elements $c\in C(G(\A^V),\zeta^V)$ such that $\pi\times c$ belongs to $\Pi_\text{disc}(G(\A_F)^Z,\zeta)$ for some $\pi\in \Pi_\disc(G,V,\zeta)$.  Following Arthur, we define for any $c\in C^V_\disc(G(\A_F)^Z,\zeta)$ the unramified normalizing factors $r_{Q|P}(c_\lambda)$ as the quotient of completed automorphic $L$-functions  
\[
L(0,c_\lambda,\rho_{Q|P})L(1,c_\lambda,\rho_{Q|P})^{-1},
\]
for $P,Q\in \P(M)$, where $\rho_{Q|P}$ is the adjoint representation of $^LM$ on the Lie algebra of the intersection of the unipotent radicals of $\hat{P}$ and $\hat{Q}$. We also form the corresponding $(G,M)$-family of functions
\[
r_Q(\Lambda,c_\lambda) = r_{Q|\bar{Q}}(c_\lambda)^{-1}r_{Q|\bar{Q}}(c_{\lambda+\Lambda/2}) 
\]
for $Q\in\P(M)$ and $\Lambda\in i\a^*_M$. The limit
\[
r^G_M(c_\lambda) = \lim_{\Lambda\to0} \sum_{Q\in \P(M)} r_Q(\Lambda,c_\lambda) \theta_Q(\Lambda)^{-1}
\]
is defined as a meromorphic function of $\lambda$. The global unnormalized weighted character, on the other hand,
\[
J_M(\dot\pi_\lambda,\dot{f}^1_z)=\tr(\J_M(\dot\pi_\lambda,P)\I_P(\dot\pi_\lambda,\dot{f}^1_z))
\]
is to be expressed in terms of the local normalized weighted characters
\[
J_L(\pi^L_\lambda,f)=\tr(\M_L(\pi^L_\lambda,{P_L})\I_{P_L}(\pi^L_\lambda,f)), \quad L\in \L(M), P_L\in\P(L).
\]
Since $\dot\pi$ is unramified outside of $V$, $\J_Q(\Lambda,\dot\pi_\lambda,P)$ is a scalar multiple of $\M_Q(\Lambda,\pi,P)$, namely
\[
\J_Q(\Lambda,\dot\pi_\lambda,P) = r_Q(\Lambda,c_\lambda,P)\mu_Q(\Lambda,c_\lambda,P)\M_Q(\Lambda,\pi_\lambda,P)
\]
according to \cite[p.207]{STF1}. We can then define the unramified character
\be
\label{rcb}
r^L_M(c_\lambda,b) = r^L_M(c_\lambda)b^V_M(c_\lambda), \qquad c\in C^V_\disc(M,\zeta).
\ee
It follows from \cite[Lemma 3.2]{STF1} and the absolute convergence of $b^V_M(c_\lambda)$ for Re$(s)$ large enough that for $c\in C_\text{disc}^V(M,\zeta)$, the function $r^L_M(c_\lambda,b)$ is an analytic function of $\lambda\in i\a^*_{M,Z}$ and
\[
\int_{i\a^*_{M,Z}/i\a^*_{G,Z}}r^L_M(c_\lambda,b)(1+||\lambda||)^{-N}d\lambda
\]
converges for $N$ large enough.

Let $\tilde\Pi_\disc(M,V,\zeta)$ be the preimage of $\Pi_\disc(M,V,\zeta)$ in $\Pi_\text{unit}(M(F_V),\zeta_V)$, and let $\Pi^G_\disc(M,V,\zeta)$ be the set of $i\a^*_{G,Z}$-orbits in $\tilde\Pi_\disc(M,V,\zeta)$.  There is a free action 
\[
\rho\to \rho_\lambda,\qquad \lambda\in i\a^*_{M,Z}/i\a^*_{G,Z}
\]
on $\Pi^G_\disc(M,V,\zeta)$ whose orbits can be identified with $\Pi_\disc(M,V,\zeta)$. Any element $\rho\in \Pi^G_\disc(M,V,\zeta)$ is an irreducible representation of $M(F_V)\cap G(F_V)^Z$, from which one can form the parabolically induced representation $\rho^G$ of $G(F_V)^Z$. Recall that the induction is defined by the relation 
\[
f_G(\rho^G) = f_M(\rho),\qquad f\in \C^\circ(G(F_V),\zeta_V),
\]
 with the adjoint restriction map $\pi\mapsto \pi_M$ from $\F(G(F_V),\zeta_V)$ to $\F(M(F_V),\zeta_V)$, such that
\be
\label{rhoG}
\sum_{\pi\in\Pi(G(F_V),\zeta_V)}c_M(\pi_M)d_G(\pi) = \sum_{\rho\in\Pi(M(F_V),\zeta_V)}c_M(\rho)d_G(\rho^G)
\ee
for any linear function $c_M$ on $\F(M(F_V),\zeta_V)$ and $d_G$ on $\F(G(F_V),\zeta_V)$. We then define $\Pi(G,V,\zeta)$ to be the union over $M\in \L$ and $\rho\in \Pi_\disc^G(M,V,\zeta)$ of irreducible constituents of $\rho^G$. It has a Borel measure $d\pi$ given by
\[
\int_{\Pi(G,V,\zeta)}h(\pi)d\pi = \sum_{M\in\L}|W_0^M|^{-1}|W_0^G|^{-1}\sum_{\rho\in\Pi_\disc(M,V,\zeta)}\int_{i\a^*_{M,Z}/i\a^*_{G,Z}}h(\rho^G_\lambda)d\lambda
\]
for any $h\in C_c(\Pi(G,V,\zeta))$.  We then define for any $\pi\in\Pi(G(F_V)^Z,\zeta_V)$, the spectral coefficient
\be
\label{arspi}
a^G_{r,s}(\pi) =  \sum_{L\in\L}|W^L_0||W^G_0|^{-1}\sum_{c\in C^V_\disc(M,\zeta)}a^M_\disc(\pi_M\times c)r^G_M(c,b),
\ee
where $\pi_M\times c$ is a finite sum of representations $\dot\pi$ in $\Pi_\text{unit}(M(\A),\zeta)$, and $a^M_\disc(\pi_M\times c)$ is the sum of corresponding values $a^G_\text{disc}(\dot\pi)$. It follows from the definitions that $a^G_{r,s}(\pi)$ is supported on the subset $\Pi(G,V,\zeta)$ of $\Pi(G(F_V)^Z,\zeta_V)$.

\begin{thm}
\label{Ispec}
Let $f\in \C^\circ(G,V,\zeta)$. Then the invariant linear form $I^r_s(f)$ has the spectral expansion
\be
\label{Irss}
I^r_s(f) = \sum_{M\in\L}|W^M_0||W^G_0|^{-1}\int_{\Pi(M,V,\zeta)}a^M_{r,s}(\pi)I_M(\pi,f)d\pi,
\ee
with the integrals converging absolutely.
\end{thm}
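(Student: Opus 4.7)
The plan is to parallel the proof of Theorem~\ref{Igeom}, starting from the refined noninvariant spectral expansion in Proposition~\ref{eis} and deducing the invariant version inductively via the definition \eqref{irs}. The first task is to rewrite the global unnormalized weighted character $J_M(\dot{\pi}_\lambda,\dot{f}^1_{r,s})$ in terms of local normalized weighted characters on the places in $V$. Since $\dot{f}^1_{r,s}=f\times b^V$ is unramified outside $V$, the factorization $\J_Q(\Lambda,\dot\pi_\lambda,P)=r_Q(\Lambda,c_\lambda,P)\mu_Q(\Lambda,c_\lambda,P)\M_Q(\Lambda,\pi_\lambda,P)$ recalled in Section~\ref{invspec}, together with the unramified Hecke eigenvalue $b^V_M(c_\lambda)$ attached to the basic function, gives the splitting
\[
J_M(\dot{\pi}_\lambda,\dot{f}^1_{r,s}) \;=\; \sum_{L\in\L(M)} r^L_M(c_\lambda,b)\,J_L(\pi^L_\lambda,f),
\]
where $r^L_M(c_\lambda,b)$ is the unramified character defined in \eqref{rcb}. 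This is the analogue for the basic function $b^V$ of the splitting used in Proposition~\ref{Jspecprop}, and it is the spectral counterpart of the use of the descent and splitting of weighted orbital integrals in the proof of Theorem~\ref{Igeom}.

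Inserting this into Proposition~\ref{eis} and exchanging the order of summation so that $L$ is summed last, one obtains
\[
J^r_s(f) \;=\; \sum_{L\in\L}|W^L_0||W^G_0|^{-1} \sum_{M\in\L^L} |W^M_0||W^L_0|^{-1}\!\!\sum_{\dot\pi\in\Pi_{\mathrm{disc}}(M,\zeta)}\!\! a^M_{\mathrm{disc}}(\dot\pi_\lambda)\!\!\int_{i\a^*_{M,Z}/i\a^*_{G,Z}} \!\!r^L_M(c_\lambda,b)J_L(\pi^L_\lambda,f)\,d\lambda.
\]
Parametrizing the orbit $\Pi^G_{\mathrm{disc}}(M,V,\zeta)$ of $i\a^*_{G,Z}$ on the unitary dual, using the adjoint property \eqref{rhoG} to push $\pi^L_\lambda$ out to representations in $\Pi(L,V,\zeta)$, and then invoking the definition \eqref{arspi} of $a^L_{r,s}(\pi)$, the inner double sum collapses into $a^L_{r,s}(\pi)J_L(\pi,f)$ integrated against the Borel measure on $\Pi(L,V,\zeta)$. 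Relabeling $L$ as $M$ yields the refined spectral expansion
\[
J^r_s(f) \;=\; \sum_{M\in\L}|W^M_0||W^G_0|^{-1}\int_{\Pi(M,V,\zeta)} a^M_{r,s}(\pi)\,J_M(\pi,f)\,d\pi.
\]

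The passage from $J^r_s(f)$ to $I^r_s(f)$ is then formal. Assuming inductively that \eqref{Irss} holds for every proper Levi $L\in\L^0$, substitute into \eqref{irs}; the resulting expression groups itself, by the definition of $I_M(\pi,f)=J_M(\pi,f)-\sum_{L\in\L^0(M)}\hat I^L_M(\pi,\phi_L(f))$, into exactly the required sum $\sum_M|W^M_0||W^G_0|^{-1}\int a^M_{r,s}(\pi)I_M(\pi,f)\,d\pi$. This mirrors the final step of the proof of Theorem~\ref{Igeom}.

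The main obstacle is justifying the interchange of the summations and integrations, and in particular the absolute convergence of the expansion. The ordinary noninvariant expansion is absolutely convergent in the trace norm thanks to the FLM estimates recorded in Proposition~\ref{Jspecprop}, and the additional factor $b^V_M(c_\lambda)$ inside $r^L_M(c_\lambda,b)$ is controlled by the stated convergence of $\int_{i\a^*_{M,Z}/i\a^*_{G,Z}}r^L_M(c_\lambda,b)(1+\|\lambda\|)^{-N}\,d\lambda$ for $N$ large enough, valid for $\mathrm{Re}(s)$ large enough by \cite[Lemma~3.2]{STF1} and the absolute convergence of the Euler product defining $b^V_M(c_\lambda)$ in that range. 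Combining these bounds with the temperedness of $J_M(\pi,f)$ on $\C^\circ(G,V,\zeta)$ (ensured by the continuity statement used in Proposition~\ref{invcont}) permits the Fubini-type rearrangements above, and delivers the absolute convergence claimed in the theorem.
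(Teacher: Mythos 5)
Your proposal is correct and follows essentially the same route as the paper: the same splitting of the global unnormalized weighted character via $r^L_M(c_\lambda,b)$, the same use of the definition \eqref{arspi} to collapse the sum over $c\in C^V_{\mathrm{disc}}(M,\zeta)$, and the same induction on proper Levi subgroups to pass from $J^r_s$ to $I^r_s$. The only cosmetic difference is that you fold the $\zeta$-averaging over $Z(F)\backslash Z(\A)^1$ (which cuts $\Pi_{\mathrm{disc}}(M)$ down to $\Pi_{\mathrm{disc}}(M,\zeta)$) into your starting formula rather than carrying it out explicitly, and you make the convergence justification slightly more explicit than the paper does.
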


\begin{proof}

As in \eqref{fzeta2}, we would like a parallel expansion for the linear form
\[
J^r_s(f) = J^\zeta(\dot f^1_{r,s}) = \int_{Z(F)\bs Z(\A)^1} J(\dot f^1_{r,s,z}) \zeta(z)dz
\]
where $\dot f^1_{r,s}$ is any function in in $\C^\circ(G(\A_F)^1)$ whose projection onto $\C^\circ(G(\A_F)^Z,\zeta)$ equals $\dot f^r_s = f \times b^V$. It then follows from Proposition \ref{eis} that $J^r_s(f)$ has an expansion
\[
\int_{Z(F)\bs Z(\A)^1}\sum_{M\in\L}|W^M_0||W^G_0|^{-1}\sum_{\dot{\pi}\in\Pi_\disc(M)}\int_{i\a^*_{M,Z}/ i\a^*_{G,Z}} a^M_\disc(\dot{\pi}_\lambda)J_M(\dot{\pi}_\lambda,\dot{f}^1_{r,s,z})\zeta(z)d\lambda \ dz.
\]
Now the outer integral annihilates the contribution of $\dot\pi$ in the complement of $\Pi_\disc(M,\zeta)$ in $\Pi_\disc(M)$, so the expression simplifies to
\[
J^r_s(f) =\sum_{M\in\L}|W^M_0||W^G_0|^{-1}\sum_{\dot{\pi}\in\Pi_\disc(M,\zeta)}\int_{i\a^*_{M,Z}/ i\a^*_{G,Z}} a^M_\disc(\dot{\pi}_\lambda)J_M(\dot{\pi}_\lambda,\dot{f}_{r,s})d\lambda.
\]

We have to express $J_M(\dot{\pi}_\lambda,\dot{f})$ in terms of local normalized weighted characters.  Applying the splitting formula
\[
\J_M(\dot\pi_\lambda,P) = \sum_{L\in\L(M)}r^L_M(c_\lambda)\M_L(\pi^L_\lambda,P)
\]
in \cite[p.208]{STF1}, we have for the choice of test function function $\dot{f}^r_s$ the relation
\[
J_M(\dot\pi_\lambda,\dot f^r_s) = \sum_{L\in\L(M)}r^L_M(c_\lambda,b) J_M(\pi^L_\lambda,f),
\]
which vanishes unless $\dot\pi$ is unramified outside of $V$. Writing $\dot\pi = \pi\times c$, we can replace the sum over $\dot\pi$ with a sum over $\pi$ in $\Pi_\text{disc}(M,V,\zeta)$ and $c\in C^V_\text{disc}(M,\zeta)$. From the definition of the spectral coefficients, we write
\[
a^L_{r,s}(\pi^L_\lambda) = \sum_{c \in \C^V_\text{disc}(M,\zeta)}a^M_\text{disc}(\pi_\lambda\times c_\lambda)r^L_M(c_\lambda,b )
\]
for any $\lambda\in i\a^*_{M,Z}$ in general position, ignoring sets of measure zero. We can therefore rewrite our expansion as
\[
\sum_{L\in\L}|W^L_0||W^G_0|^{-1}\sum_{M\in\L^L}|W^M_0||W^L_0|^{-1}\sum_{\pi\in\Pi_\text{disc}(M,V,\zeta)}\int_{i\a^*_{M,Z}/i\a^*_{G,Z}} a^L_{r,s}(\pi^L_\lambda) J_L(\pi^L_\lambda,f)d\lambda.
\]
The coefficient $a^L(\pi^L_\lambda)$ and the integral 
\[
\int_{i\a^*_{L,Z}/i\a^*_{G,Z}}  J_L(\pi^L_{\lambda+\Lambda},f)d\Lambda
\]
depend only on the image of $\lambda$ in $i\a^*_{M,Z}/i\a^*_{L,Z}$, hence on the restriction of $\pi^L_\lambda$ to $L^V_Z$. Writing $\pi$ for this restriction, which runs over $\Pi(L,V,\zeta)$, we arrive at 
\[
J(\dot f^r_s)= \sum_{L\in\L}|W^L_0||W^G_0|^{-1}\int_{\Pi(L,V,\zeta)}a^L(\pi)J_L(\pi,f)d\pi .
\]

Again, to convert the expansion for $J(f^r_s)$ into an expansion of $I(f^r_s)$,  we assume inductively that the expansion \eqref{Irss} holds if $G$ is replaced by any proper Levi $L\in \L^0$. Then using the expansion we have just obtained for $J^r_s(f)$, we see that $I^r_s(f)$ is equal to
\[
 \sum_{M\in\L}|W^M_0||W^G_0|^{-1}\int_{\Pi(M,V,\zeta)}a^M_{r,s}(\pi)\left(J_M(\pi,f) - \sum_{L\in \L(M)}\hat{I}^L_M(\pi,\phi_L(f))\right)d\pi.
\]
By the definition of $I_M(\pi,f)$ then this is equal to 
\[
\sum_{M\in\L}|W^M_0||W^G_0|^{-1}\int_{\Pi(M,V,\zeta)}a^M_{r,s}(\pi)I_M(\pi,f)d\pi
\]
as required.
\end{proof}

Putting Theorems \ref{Igeom} and \ref{Ispec} together, we have an invariant trace formula that is valid for $\dot f^r_s = f \times b$. 

\bibliography{BESL2}

\newcommand{\etalchar}[1]{$^{#1}$}
\begin{thebibliography}{GKM{\etalchar{+}}18}

\bibitem[Alt15]{Alt1}
Salim~Ali Altu{\u{g}}.
\newblock Beyond endoscopy via the trace formula: 1. {P}oisson summation and
  isolation of special representations.
\newblock {\em Compos. Math.}, 151(10):1791--1820, 2015.

\bibitem[Art82]{arthureis}
James Arthur.
\newblock On a family of distributions obtained from {E}isenstein series. {II}.
  {E}xplicit formulas.
\newblock {\em Amer. J. Math.}, 104(6):1289--1336, 1982.

\bibitem[Art85]{unip}
James Arthur.
\newblock A measure on the unipotent variety.
\newblock {\em Canad. J. Math.}, 37(6):1237--1274, 1985.

\bibitem[Art86]{orbits}
James Arthur.
\newblock On a family of distributions obtained from orbits.
\newblock {\em Canad. J. Math.}, 38(1):179--214, 1986.

\bibitem[Art88a]{ITF1}
James Arthur.
\newblock The invariant trace formula. {I}. {L}ocal theory.
\newblock {\em J. Amer. Math. Soc.}, 1(2):323--383, 1988.

\bibitem[Art88b]{ITF2}
James Arthur.
\newblock The invariant trace formula. {II}. {G}lobal theory.
\newblock {\em Journal of the American Mathematical Society}, 1(3):501--554,
  1988.

\bibitem[Art88c]{local}
James Arthur.
\newblock The local behaviour of weighted orbital integrals.
\newblock {\em Duke Math. J.}, 56(2):223--293, 1988.

\bibitem[Art94]{artFT}
James Arthur.
\newblock On the {F}ourier transforms of weighted orbital integrals.
\newblock {\em J. Reine Angew. Math.}, 452:163--217, 1994.

\bibitem[Art98]{canon}
James Arthur.
\newblock Canonical normalization of weighted characters and a transfer
  conjecture.
\newblock {\em C. R. Math. Acad. Sci. Soc. R. Can.}, 20(2):33--52, 1998.

\bibitem[Art99]{ArtTW}
James Arthur.
\newblock On the transfer of distributions: weighted orbital integrals.
\newblock {\em Duke Math. J.}, 99(2):209--283, 1999.

\bibitem[Art02]{STF1}
James Arthur.
\newblock A stable trace formula. {I}. {G}eneral expansions.
\newblock {\em J. Inst. Math. Jussieu}, 1(2):175--277, 2002.

\bibitem[Art05]{Art}
James Arthur.
\newblock An introduction to the trace formula.
\newblock In {\em Harmonic analysis, the trace formula, and {S}himura
  varieties}, volume~4 of {\em Clay Math. Proc.}, pages 1--263. Amer. Math.
  Soc., Providence, RI, 2005.

\bibitem[Art17]{problems}
James Arthur.
\newblock Problems beyond endoscopy.
\newblock In {\em Representation Theory, Number Theory, and Invariant Theory},
  pages 23--45. Springer, 2017.

\bibitem[Art18]{artstrat}
James Arthur.
\newblock A stratification related to characteristic polynomials.
\newblock {\em Adv. Math.}, 327:425--469, 2018.

\bibitem[CCH19]{CCH}
William Casselman, Jorge~E. Cely, and Thomas Hales.
\newblock The spherical {H}ecke algebra, partition functions, and motivic
  integration.
\newblock {\em Trans. Amer. Math. Soc.}, 371(9):6169--6212, 2019.

\bibitem[FL11]{FLss}
Tobias Finis and Erez Lapid.
\newblock On the continuity of {A}rthur's trace formula: the semisimple terms.
\newblock {\em Compos. Math.}, 147(3):784--802, 2011.

\bibitem[FL16]{FL}
Tobias Finis and Erez Lapid.
\newblock On the continuity of the geometric side of the trace formula.
\newblock {\em Acta Math. Vietnam.}, 41(3):425--455, 2016.

\bibitem[FLM11]{FLM}
Tobias Finis, Erez Lapid, and Werner M{\"u}ller.
\newblock On the spectral side of arthur's trace formula: absolute convergence.
\newblock {\em Annals of mathematics}, pages 173--195, 2011.

\bibitem[FLN10]{FLN}
Edward Frenkel, Robert Langlands, and B\'ao~Ch\^au Ng\^o.
\newblock Formule des traces et fonctorialit\'e: le d\'ebut d'un programme.
\newblock {\em Ann. Sci. Math. Qu\'ebec}, 34(2):199--243, 2010.

\bibitem[GKM{\etalchar{+}}18]{GKM}
Oscar~E Gonz{\'a}lez, Chung~Hang Kwan, Steven~J Miller, Roger Van~Peski, and
  Tian~An Wong.
\newblock On smoothing singularities of elliptic orbital integrals on {GL($n$)}
  and beyond endoscopy.
\newblock {\em Journal of Number Theory}, 183:407--427, 2018.

\bibitem[Hai18]{Haines}
Thomas~J. Haines.
\newblock Dualities for root systems with automorphisms and applications to
  non-split groups.
\newblock {\em Represent. Theory}, 22:1--26, 2018.

\bibitem[HC65]{HCS}
Harish-Chandra.
\newblock Discrete series for semisimple {L}ie groups. {I}. {C}onstruction of
  invariant eigendistributions.
\newblock {\em Acta Math.}, 113:241--318, 1965.

\bibitem[Lan70]{Lprobs}
R.~P. Langlands.
\newblock Problems in the theory of automorphic forms.
\newblock In {\em Lectures in modern analysis and applications, {III}}, pages
  18--61. Lecture Notes in Math., Vol. 170. 1970.

\bibitem[Lan04]{BE}
Robert~P. Langlands.
\newblock Beyond endoscopy.
\newblock In {\em Contributions to automorphic forms, geometry, and number
  theory}, pages 611--697. Johns Hopkins Univ. Press, Baltimore, MD, 2004.

\bibitem[Li17]{Li}
Wen-Wei Li.
\newblock Basic functions and unramified local {$L$}-factors for split groups.
\newblock {\em Sci. China Math.}, 60(5):777--812, 2017.

\bibitem[Mat15]{matz}
Jasmin Matz.
\newblock Bounds for global coefficients in the fine geometric expansion of
  {A}rthur's trace formula for {${\rm GL}(n)$}.
\newblock {\em Israel J. Math.}, 205(1):337--396, 2015.

\bibitem[MW16]{MW2}
Colette Moeglin and Jean-Loup Waldspurger.
\newblock {\em Stabilisation de la formule des traces tordue. {V}ol. 2}, volume
  317 of {\em Progress in Mathematics}.
\newblock Birkh\"{a}user/Springer, Cham, 2016.

\bibitem[Ng{\^o}16]{takagi}
Bao~Ch{\^a}u Ng{\^o}.
\newblock Hankel transform, langlands functoriality and functional equation of
  automorphic {$L$}-functions.
\newblock {\em Takagi lectures}, 18:1--19, 2016.

\bibitem[Par19]{P}
Abhishek Parab.
\newblock Absolute convergence of the twisted trace formula.
\newblock {\em Math. Z.}, 292(1-2):529--567, 2019.

\bibitem[Wona]{maps2}
Tian~An Wong.
\newblock On the unitary part of the stable trace formula.
\newblock {\em Preprint}.

\bibitem[Wonb]{wstf1}
Tian~An Wong.
\newblock A weighted stable trace formula {I}: Basic functions.
\newblock {\em Preprint}.

\end{thebibliography}
\bibliographystyle{alpha}
\end{document}